\newtheorem{lemma}{Lemma}
\newtheorem{theorem}{Theorem}
\newtheorem{cor}{Corollary}
\title{Growth of matrix products and mixing properties of the horocycle
flow}
\author{F\"edor Nazarov and Ekaterina Shulman \footnote{Research of the
second author supported by the Israel Science Foundation of the
Israel Academy of Sciences and Humanities}}
\begin{document}
\date{}

\maketitle

\section{Introduction}

In this paper we investigate the following problem. Let $H(t) =
\begin{pmatrix} 1 & t \\  0 & 1 \end{pmatrix}$ and let
$\Phi_*=\{\Phi_n\}$ be an arbitrary sequence of matrices from
$SL(2,\mathbb{R})$. We will consider the sequence of products
$P_n(t) = \Phi_n H(t)\Phi_{n-1} H(t) \, ... \, \Phi_1 H(t)$ and
denote by $\mathfrak B(\Phi_*)$ the set of those periods $t \in
\mathbb R_+$ for which the sequence $\{P_n(t)\}$ is bounded:
\[
\mathfrak B(\Phi_*) = \left\{t\in\mathbb R_+\colon \sup_{n\ge 1}
||P_n(t)||<\infty \right\}\,.
\]
The question is: {\it how large the set $\mathfrak B(\Phi_*)$ can
be?} We present three results on this subject. The first one shows
that for every $\{\Phi_n\}$ the set $\mathfrak B(\Phi_*)$ is  not
``very large":
\begin{theorem}\label{main}
For every sequence $\Phi_*$, the set $ \mathfrak B (\Phi_*)$ has
finite measure.
\end{theorem}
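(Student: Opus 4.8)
\emph{Plan.} I would reduce the theorem to a quantitative statement about sublevel sets of polynomials. Put $E_M=\{t\in\mathbb R_+:\sup_{n\ge1}\|P_n(t)\|\le M\}$. Since $\mathfrak B(\Phi_*)=\bigcup_{M\in\mathbb N}E_M$ is an increasing union, it suffices to bound $|E_M|$ by a quantity \emph{independent of $M$}. The vector $e_1=(1,0)^{T}$ is fixed by every $H(t)$, so $v_n(t):=P_n(t)e_1$ obeys $v_n=\Phi_nH(t)v_{n-1}=\Phi_n\bigl(v_{n-1}+t\,(v_{n-1})_2\,e_1\bigr)$, whence by induction $v_n$ is a polynomial in $t$ of degree $\le n-1$ with coefficient of $t^{\,n-1}$ equal to $\bigl(\prod_{j=1}^{n-1}(\Phi_j)_{21}\bigr)\Phi_ne_1$. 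Thus the complex polynomial $V_n(t):=(v_n)_1(t)+i(v_n)_2(t)$ has some degree $d_n\le n-1$, leading coefficient of modulus $L_n$ (in the non-degenerate case $d_n=n-1$ and $L_n=\bigl|\prod_{j<n}(\Phi_j)_{21}\bigr|\,\|\Phi_ne_1\|$), and no real zero, since $P_n(t)$ is invertible. For $t\in E_M$ one has $|V_n(t)|=\|P_n(t)e_1\|\le M$.

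The main tool is the classical Chebyshev/Pólya estimate: a monic real polynomial $q$ of degree $d$ satisfies $\bigl|\{t\in\mathbb R:|q(t)|\le\rho^{d}\}\bigr|\le 4\rho$. Factoring $|V_n(t)|^{2}=L_n^{2}\prod_{k=1}^{d_n}\bigl((t-x_k)^{2}+y_k^{2}\bigr)$ with $y_k\neq0$ and discarding the $y_k^{2}$, the inequality $|V_n|\le M$ on $E_M$ forces $\prod_k|t-x_k|\le M/L_n$ there; applying the estimate to $q(t)=\prod_k(t-x_k)$ gives
\[
|E_M|\ \le\ 4\,\Bigl(\tfrac{M}{L_n}\Bigr)^{1/d_n}.
\]
Because $M^{1/d_n}\to1$ as $d_n\to\infty$, this bounds $|E_M|$ independently of $M$ as soon as $d_n\to\infty$ and $\liminf_n L_n^{1/d_n}>0$ — in particular whenever $\inf_j|(\Phi_j)_{21}|>0$, but much more generally, whenever the averages $\tfrac1n\sum_{j<n}\log|(\Phi_j)_{21}|$ stay bounded below. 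This settles Theorem~\ref{main} for all such sequences.

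The genuinely harder case is the opposite one: $d_n$ stays bounded, or $L_n^{1/d_n}\to0$ (which happens precisely when $\prod_{j<n}|(\Phi_j)_{21}|$ decays faster than exponentially, and includes the case where infinitely many $(\Phi_j)_{21}$ vanish). Here the $\Phi_j$ behave, along the relevant scales, like upper-triangular matrices, for which the right polynomial is not $v_n$ but the off-diagonal entry. To isolate this one can split $P_n(t)=Q_n(t)P_m(t)$ for a fixed large $m$; for each fixed $t$ the matrix $P_m(t)$ is invertible, so $\mathfrak B(\Phi_*)$ is also the boundedness set of the tail products $Q_n$, and one is reduced to a sequence whose entries $(\Phi_j)_{21}$ are uniformly tiny. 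In the exact model $\Phi_j=\begin{pmatrix}a_j&b_j\\0&a_j^{-1}\end{pmatrix}$ one computes $P_n(t)=\begin{pmatrix}\alpha_n&\beta_n(t)\\0&\alpha_n^{-1}\end{pmatrix}$ with $\alpha_n=\prod_{j\le n}a_j$ and $\beta_n$ of degree exactly $1$ whose $t$-coefficient $\gamma_n$ satisfies $\gamma_n/\alpha_n=\gamma_{n-1}/\alpha_{n-1}+\alpha_{n-1}^{-2}$; since the $\alpha_n$ must remain bounded away from $0$ and $\infty$ for the products to be bounded at all, $|\gamma_n|\ge c\,n$, so $|E_M|\le 2M/|\gamma_n|\to0$ and $\mathfrak B(\Phi_*)$ is in fact null. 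The main obstacle of the whole argument is to make this dichotomy robust: to exhibit, for every sequence $\Phi_*$, a polynomial derived from $P_n(t)$ whose degree and leading coefficient are controlled so that the Chebyshev estimate yields an $M$-free bound — in the near-triangular regime this polynomial has small degree but leading coefficient growing linearly in $n$, and one must show the correction terms produced by the reduction do not destroy that growth. Soft substitutes (for instance the finiteness of $\int_{\mathbb R}\|P_n(t)e_1\|^{-2}\,dt$ for each $n$) do not help, as they only give $|E_M|\lesssim M^{2}$.
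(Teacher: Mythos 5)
There is a genuine gap, and you have identified it yourself: the argument only works in the ``non-degenerate'' regime. The Chebyshev--P\'olya bound applied to the column polynomial $v_n(t)=P_n(t)e_1$ yields $|E_M|\le 4(M/L_n)^{1/d_n}$, which becomes $M$-free only when $d_n\to\infty$ and $\liminf_n L_n^{1/d_n}>0$, i.e.\ when the product $\prod_{j<n}|(\Phi_j)_{21}|$ does not decay super-exponentially. In the complementary ``near-triangular'' regime the leading coefficient you chose to track dies, the bound is vacuous, and the reduction you sketch (``split off $P_m$ and pass to a tail with uniformly tiny $(\Phi_j)_{21}$, then argue as in the exact upper-triangular model'') is not carried out. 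Your exact upper-triangular computation (linear growth of $\gamma_n/\alpha_n=\sum\alpha_j^{-2}$) is correct, but the passage from ``$(\Phi_j)_{21}$ tiny'' to ``the polynomial extracted from $P_n$ still has a usable leading coefficient'' is precisely the hard step; you say explicitly that ``one must show the correction terms produced by the reduction do not destroy that growth,'' and that is the missing content. So the proposal does not yet prove the theorem.

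For comparison, the paper also runs on a dichotomy, but a more robust one: after the Iwasawa decomposition $\Phi_n=H(s_n)D(\lambda_n)R(\alpha_n)$, it sets $q=\limsup_n n^{-1}\sum_{j\le n}|\alpha_j|$ and treats $q=0$ and $q>0$ by completely different arguments. When $q=0$ (your near-triangular regime), the paper does \emph{not} extract a polynomial; instead Lemmas~\ref{rost}--\ref{vozm} show directly that $\mathfrak B(\Phi_*)$ is bounded by $\max_n|s_n/\lambda_n^2|$, via a perturbative argument that compares $\Phi_*$ to its upper-triangular part $\{H(s_n)D(\lambda_n)\}$ along the long stretches where the angles are small (the $(*)$ condition). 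When $q>0$, the paper complexifies $t\mapsto z$ and applies potential theory to the subharmonic functions $u_n(z)=n^{-1}\log\|P_n(z)\|$, of logarithmic growth: the quadratic form $Q(x)=\mathrm{Im}(x_1\bar x_2)$ forces $\limsup u_n(z)>0$ for $z\notin\mathbb R$, and a harmonic-measure estimate on truncated domains (Lemma~\ref{harmon}/\ref{set}) then gives $|E|<\infty$. Your Chebyshev argument is in effect a hard, polynomial-by-polynomial instance of the same potential-theoretic principle, but it is pinned to a single matrix column and hence to a single leading coefficient; $\log\|P_n(z)\|$ dominates all entries at once and survives the degeneration of any one of them, which is exactly what lets the paper's soft argument close the case you could not.
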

It should be noted that for sequences $\Phi_*$ of some special
types this was already established in \cite{P-R}. Our main
innovation, which gives us the possibility to handle the general
case, is using of potential theory (Lemma \ref{harmon}).

The next two results demonstrate that the conclusion of Theorem
\ref{main} cannot be strengthened too much. Namely,  Theorem
\ref{seq} (section 5) shows that the exceptional set $\mathfrak
B(\Phi_*)$ can contain an arbitrary given sequence. In Theorem
\ref{EUS} (section 6) we produce an example of a sequence $\Phi_*$
for which the set $\mathfrak B(\Phi_*)$ is essentially unbounded,
that is $|\mathfrak B(\Phi_*) \bigcap [a,+\infty)| > 0$ for all
$a>0$ (We denote by $|E|$ the Lebesgue measure of a set $E\subset
\mathbb{R}$).

\paragraph{Motivation: stable quasi-mixing of the horocycle flow.}

In a recent work of L. Polterovich and Z. Rudnick \cite{P-R} the
authors considered the behavior of one-parameter subgroup of a Lie
group under the influence of a sequence of kicks. We remind some
basic concepts of their paper.

Let a Lie group $G$ act on a set $X$, and $\big( h^t \big)_{t\in
\mathbb R}$ be a one-parameter subgroup of $G$; we consider it as
a dynamical system acting on $X$ with continuous time $t$. We
perturb this system by a sequence of kicks $\{\phi_i\}\subset G$.
The kicks arrive periodically in time with some positive period
$t$. The dynamics of the kicked system is described by a sequence
of products $P_i(t) = \phi_ih^{t} \phi_{i-1} h^t \, ... \, \phi_1
h^t $ that depend on the period $t$. We treat $t$ as a parameter
and $i$ as a discrete time. Then the trajectory of a point $x\in
X$ is defined as $x_i=P_i(t)x$.

A dynamical property of a subgroup $(h^t)$ is called {\it kick
stable}, if for every sequence of kicks $\{\phi_i\}$, the kicked
sequence $P_i(t)$ inherits this property for a ``large" set of
periods $t$. The property we  will concentrate on in this paper,
is quasi-mixing.

A sequence $\{P_i\}$ acting on a compact measure space $(X, \mu)$
by measure-preserving automorphisms is called {\it mixing} if for
any two $L_2$-functions $F_1$ and $F_2$ on $X$
$$
\int_X F_1(P_i x)F_2(x)d\mu \rightarrow \int_X F_1(x)d\mu \int_X
F_2(x)d\mu
$$
when $i\rightarrow \infty$. A sequence $\{P_i\}$ is called {\it
quasi-mixing} if there exists a subsequence $\{i_k\}\rightarrow
\infty$ such that for any two $L_2$-functions $F_1$ and $F_2$ on
$X$
$$
\int_X F_1(P_{i_k}x)F_2(x)d\mu \rightarrow \int_X F_1(x)d\mu
\int_X F_2(x)d\mu
$$
when $k \rightarrow \infty$.

\medskip
In what follows, $G = PSL(2,\mathbb{R})$, $\Gamma \subset
PSL(2,\mathbb{R})$ is a lattice, that is a discrete subgroup such
that the Haar measure of the quotient space
$X=PSL(2,\mathbb{R})/\Gamma$ is finite. The group
$PSL(2,\mathbb{R})$ acts on $X$ by left multiplication. This
action evidently preserves the Haar measure. The principal tool
used in \cite{P-R} for the study of stable mixing in this setting,
is the Howe-Moore theorem which gives the geometric description of
mixing systems: if the sequence $\{P_i\}$ tends to
infinity\footnote{i.e., for every compact subset $Q\subset G$ the
sequence $\{P_i\}$ eventually leaves $G$} then it is mixing. It
was also shown that the converse is true. In a similar way, the
quasi-mixing is equivalent to the unboundedness of the sequence
$\{P_i\}$.

It follows from the Howe-Moore theorem that the horocycle flow
$$
H(t) =
\begin{pmatrix}
  1 & t \\
  0 & 1
\end{pmatrix}
$$
on $PSL(2,\mathbb{R})/\Gamma$ is mixing. An example given in
\cite[Remark~3.3.E]{P-R} shows that this flow is not stably
mixing. Our Theorem \ref{main} says that {\it it is stably
quasi-mixing}. This answers the question raised by Polterovich and
Rudnick \cite[Question~3.3.B]{P-R}.

\medskip\par\noindent

Let us mention a corollary to Theorem \ref{main} that pertains to
second order difference equations. It was shown in \cite{P-R} that
for a kick sequence of the form $\begin{pmatrix} 1 & 0 \\ c_n & 1
\end{pmatrix}$, the unboundedness of the evolution is equivalent to
the existence of unbounded solutions for the discrete
Schr\"odinger-type equation
\begin{equation}\label {Shr}
q_{k+1} - (2+tc_k)q_k + q_{k-1}=0,  \qquad  k\geq 1.
\end{equation}
So our result implies
\begin{cor} For every sequence $\{c_n\}$, the set of the parameters
$t \in \mathbb R_+$ for which all solutions of the difference
equation (\ref{Shr}) are bounded, has finite measure.
\end{cor}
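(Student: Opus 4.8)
The plan is to obtain the Corollary directly from Theorem~\ref{main}, applied to the kick sequence $\Phi_n=\begin{pmatrix}1&0\\ c_n&1\end{pmatrix}\in SL(2,\mathbb R)$, once one checks that for this $\Phi_*$ the set $\mathfrak B(\Phi_*)$ is exactly the set of parameters $t\in\mathbb R_+$ for which every solution of (\ref{Shr}) is bounded. This is the equivalence quoted above from \cite{P-R}; I would reprove it by the standard transfer-matrix dictionary. Fix $t>0$ and put $P_0(t)=I$. Since $\Phi_n H(t)=\begin{pmatrix}1&t\\ c_n&tc_n+1\end{pmatrix}$, a column $\begin{pmatrix}x_{n-1}\\ y_{n-1}\end{pmatrix}$ of $P_{n-1}(t)$ is sent to the corresponding column $\begin{pmatrix}x_n\\ y_n\end{pmatrix}$ of $P_n(t)=\Phi_nH(t)P_{n-1}(t)$ with $x_n=x_{n-1}+t y_{n-1}$ and $y_n=c_nx_n+y_{n-1}$. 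Eliminating $y$ gives $y_n=(x_{n+1}-x_n)/t$ and $x_{n+1}=(2+tc_n)x_n-x_{n-1}$, so the top entry of each column, viewed as a function of $n$, is a solution of (\ref{Shr}); conversely $\begin{pmatrix}x_{n+1}\\ x_n\end{pmatrix}=\begin{pmatrix}1&t\\ 1&0\end{pmatrix}\begin{pmatrix}x_n\\ y_n\end{pmatrix}$, and this fixed (since $t>0$) invertible linear map shows that a column of $P_n(t)$ is bounded in $n$ if and only if the associated solution of (\ref{Shr}) is bounded.

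Next I would use $P_0(t)=I$ to identify the two columns: they correspond to the solutions of (\ref{Shr}) with $(x_0,x_1)=(1,1)$ and $(x_0,x_1)=(0,t)$, which for $t>0$ are linearly independent and hence span the two-dimensional solution space. Consequently $\sup_n\|P_n(t)\|<\infty$ iff both of these basis solutions are bounded iff every solution of (\ref{Shr}) is bounded; that is, $\mathfrak B(\Phi_*)$ is precisely the set appearing in the Corollary. Applying Theorem~\ref{main} to $\Phi_*$ then yields $|\mathfrak B(\Phi_*)|<\infty$, which is the assertion.

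The whole deduction is routine given Theorem~\ref{main}; the only place that needs a word of care is making sure that \emph{no} boundedness is lost when passing between a column of $P_n(t)$ and a solution of (\ref{Shr}) in either direction — but this is exactly what is secured by the fixed invertible linear change $\begin{pmatrix}x_n\\ y_n\end{pmatrix}\leftrightarrow\begin{pmatrix}x_{n+1}\\ x_n\end{pmatrix}$ above, which is valid for each individual $t>0$.
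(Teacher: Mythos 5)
Your proposal is correct and follows essentially the same route as the paper: the paper cites \cite{P-R} for the equivalence between boundedness of $\{P_n(t)\}$ (with $\Phi_n=\begin{pmatrix}1&0\\ c_n&1\end{pmatrix}$) and boundedness of all solutions of (\ref{Shr}), and then applies Theorem~\ref{main}; you simply spell out that equivalence via the transfer-matrix computation and the fixed invertible change of coordinates $\begin{pmatrix}x_n\\ y_n\end{pmatrix}\leftrightarrow\begin{pmatrix}x_{n+1}\\ x_n\end{pmatrix}$. The details check out (in particular the two columns of $P_0(t)=I$ give the basis solutions with $(q_0,q_1)=(1,1)$ and $(0,t)$), so this is a sound and, if anything, more self-contained version of the paper's argument.
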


\medskip\par\noindent\textbf{Acknowledgment}
We are grateful to L.~Polterovich, M.~Sodin, D.~Burago and P.~Yuditskii
for involving us into this subject and for very helpful discussions.

\section{Outline of the proof of Theorem \ref{main}}

Our proof of Theorem \ref{main} consists of several steps and uses
some preliminary results (Lemmas 1-5 below).  For convenience of a
reader we begin with an outline of this proof.

\medskip\par\noindent{\bf Step 1.} First of all we show, that the
problem can be reduced to the case of bounded sequences of kicks
$\Phi_*$ (Lemma~\ref{ogr}).

\medskip\par\noindent{\bf Step 2.} For bounded sequences we use the
Iwasawa's decomposition of $2\times 2$ matrices:
$$
\Phi_n=\begin{pmatrix} 1 & s_n \\ 0 & 1\end{pmatrix}
\begin{pmatrix} \lambda_n & 0 \\ 0 & \frac{1}{\lambda_n}\end{pmatrix}
\begin{pmatrix} \cos \alpha_n & -\sin \alpha_n \\ \sin \alpha_n  & \cos
\alpha_n\end{pmatrix}:= H(s_n)D(\lambda_n)R(\alpha_n).
$$
with bounded sequences $\{H(s_n)\}$ and $\{D(\lambda_n)\}$ and
$-\frac{\pi}{2} \leq \alpha \leq \frac{\pi}{2}$. Denoting by
$$
q=\limsup_{n\rightarrow\infty}\frac{1}{n}\sum_{j=1}^n |\alpha_j|
$$
we then consider two cases separately: $q=0$ and $q>0$.

\medskip\par\noindent{\bf Step 3.} In the case of ``small" angles
($q=0$), the sequence $\{\Phi_n\}$ is ``close" (in  some sense
which we define below) to the bounded sequence
$\{H(s_n)D(\lambda_n)\}$ of upper-triangular matrices. This
implies that the set $ \mathfrak B (\Phi_*)$ is bounded (see
Lemma~\ref{vozm}).

\medskip\par\noindent{\bf Step 4.} In the case $q>0$ we extend our
problem to the complex plane and consider $SL(2,\mathbb C) $-matrices
$H(z)=\begin{pmatrix} 1 & z \\ 0 & 1\end{pmatrix}$. Respectively,
$P_n(z)=\Phi_n H(z)\cdot\,...\,\cdot\Phi_1 H(z)$. We show that the
set
$$
E=\{z\in\mathbb{C}\colon \limsup_{n\rightarrow\infty}
\frac{\log\|P_n(z)\|}{n}=0 \}
$$
is contained in $\mathbb{R}$ and has finite length. So we not only
prove that the sequence $\{P_n\}$ is unbounded  but prove that it
has exponential growth for all $t$ apart of a set of finite
measure.

\smallskip
In order to show that
\begin{equation}
\limsup_{n\rightarrow\infty} \frac{\log\|P_n(z)\|}{n}>0  \qquad z
\in \mathbb{C}\setminus\mathbb{R}.
\end{equation}
we have to estimate $\|P_n(z)\|$ from below. To this aim we
use the quadratic form
$$
Q(x) = {\rm Im} \left( x_1\bar{x}_2 \right), \qquad x=(x_1,x_2)
\in \mathbb{C}^2,
$$
which has the following properties:
\begin{itemize}
\item[(i)] for arbitrary  $y$,  $\|y\|^2 \geq 2Q(y)$,
\item[(ii)] for every $z \in \mathbb{C}$ with ${\rm Im} z > 0$,
one has
\begin{equation}
Q \left(H(z)\Phi_n H(z)x\right)\geq Q(x)\left( 1 +
\frac{|\alpha_n|{\rm Im} z}{2k(1+ |z|)} \right).
\end{equation}
\end{itemize}
Due to these properties we get that
\begin{equation*}
\overline{\lim_{n\rightarrow\infty}}\ \ \frac{\log \|P_n(z)\|}{n}=
\overline{\lim_{n\rightarrow\infty}}\ \ \frac{1}{n}\log
\Big\|\prod_ {1\leq j \leq n}^{\curvearrowleft}H(z/2)\Phi_j
H(z/2)\Big\| > 0.
\end{equation*}
(We denote by $\displaystyle \prod_{1\leq j \leq
n}^{\curvearrowleft}A_j$ the matrix product $A_nA_{n-1}\ \ldots \
A_1$.)

The claim that $\left|E\right|<\infty$ follows now from a
potential theory lemma (Lemma~\ref{harmon}) applied to the
subharmonic functions
$$
u_n(z)=\frac{\log \|P_n(z)\|}{n}.
$$

\section{Preliminaries}
\begin{lemma}\label{ogr}
If the sequence of kicks $\Phi _*$ is unbounded then the set
$\mathfrak B(\Phi_*)$ is empty.
\end{lemma}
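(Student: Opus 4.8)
The plan is to recover each kick $\Phi_n$ from two consecutive partial products and read off a bound on $\|\Phi_n\|$ directly. Fix any $t \in \mathbb{R}_+$ and set $P_0(t) = I$. From the defining recursion $P_n(t) = \Phi_n H(t) P_{n-1}(t)$ one gets $\Phi_n = P_n(t)\,P_{n-1}(t)^{-1}\,H(t)^{-1}$. I would then use that the norm is submultiplicative together with the elementary fact that for $A \in SL(2,\mathbb{R})$ one has $\|A^{-1}\| = \|A\|$ (indeed $A^{-1}$ is the adjugate of $A$ when $\det A = 1$, so its entries coincide with those of $A$ up to sign; equivalently $A$ and $A^{-1}$ have the same singular values). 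This gives
$$
\|\Phi_n\| \le \|P_n(t)\|\cdot\|P_{n-1}(t)^{-1}\|\cdot\|H(t)^{-1}\| = \|H(-t)\|\cdot\|P_n(t)\|\cdot\|P_{n-1}(t)\|
$$
for every $n \ge 1$, since $H(t)^{-1} = H(-t)$.

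With this in hand the lemma follows by contraposition. Suppose $t \in \mathfrak B(\Phi_*)$; then $M := \sup_{n\ge 1}\|P_n(t)\| < \infty$, and $\|H(-t)\|$ is a finite constant depending only on $t$, so the displayed inequality gives $\|\Phi_n\| \le \|H(-t)\|\,M^2$ for all $n$, i.e. $\Phi_*$ is bounded. Thus if $\Phi_*$ is unbounded no such $t$ exists, and $\mathfrak B(\Phi_*) = \emptyset$.

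I do not expect any genuine obstacle: the argument is a one-line matrix manipulation, and the only point needing a remark is the norm comparison $\|A^{-1}\| = \|A\|$ on $SL(2,\mathbb{R})$, which is immediate. The role of the lemma in what follows is exactly to justify Step 1 of the outline — that for the proof of Theorem \ref{main} one may assume the kick sequence $\Phi_*$ to be bounded.
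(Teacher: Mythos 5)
Your proof is correct and is essentially identical to the paper's: both recover $\Phi_n = P_n(t)P_{n-1}(t)^{-1}H(t)^{-1}$, use submultiplicativity together with $\|A^{-1}\|=\|A\|$ on $SL(2,\mathbb{R})$, and conclude $\|\Phi_n\|\le M^2\|H(t)\|$. The only cosmetic difference is that you phrase it as a contrapositive where the paper argues by contradiction.
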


\begin{proof}
Aiming at a contradiction, we assume that for some $t>0$ the sequence
$\{\|P_n(t)\|\}$ is bounded by $M$. Taking into account that
$\|A^{-1}\| = \|A\|$ for $A\in SL(2,\mathbb{R})$, we obtain
\begin{multline*}
\|\Phi_n\| = \|P_n(t) \left( P_{n-1}(t)\right)^{-1}
\left(H(t)\right)^{-1}\| \\
\leq  \|P_n(t) \| \cdot \|\left( P_{n-1}(t) \right)^{-1}\| \cdot
\|\left(H(t)\right)^{-1}\|\leq M^2\|H(t)\|
\end{multline*}
which contradicts to the unboundedness of $\{\Phi _n\}$.
\end{proof}

Thus, in what follows, we assume that the sequence $\{\Phi _n\}$
is bounded.

\begin{lemma}\label{rost} Let $\Psi_*$ be a bounded sequence of
upper-triangular matrices:
\begin{equation}\label{d}
\Psi_n = \begin{pmatrix} \lambda_n & s_n \\ 0 &
\frac{1}{\lambda_n}\end{pmatrix}
\end{equation}
and
\begin{equation}\label{max}
t_0=\max\{|s_n/\lambda_n|, n \in \mathbb{N}\}.
\end{equation}
Then, for all $t>t_0$ and all $K>0$, there exists $N \in
\mathbb{N}$ such that, for each $j \in \mathbb{N}$, at least one
of $N$ products
\begin{equation} \label{piece}
\Psi_{j+m}H(t) \cdot \, ... \, \cdot \Psi_{j+1}H(t) , \qquad m=1,
2, \ldots, N
\end{equation}
has norm larger than $K$.
\end{lemma}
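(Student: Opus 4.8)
The plan is to exploit that each factor $\Psi_nH(t)$ is itself upper triangular, so that every product in~\eqref{piece} is upper triangular and can be analysed through its matrix entries. Since replacing any $\Psi_n$ by $-\Psi_n$ multiplies each product in the statement by $\pm1$ and hence changes none of the norms involved (and leaves $t_0$ and the boundedness of $\Psi_*$ untouched), we may assume $\lambda_n>0$ for all $n$. Put $C:=\sup_n\|\Psi_n\|<\infty$; as $\lambda_n$ and $1/\lambda_n$ are entries of $\Psi_n$ we get $1/C\le\lambda_n\le C$. A direct multiplication gives
\[
\Psi_nH(t)=\begin{pmatrix}\lambda_n & \lambda_nt+s_n\\ 0 & 1/\lambda_n\end{pmatrix},
\]
whose off-diagonal entry is $\mu_n:=\lambda_n\bigl(t+s_n/\lambda_n\bigr)\ge\lambda_n(t-t_0)\ge\delta$, where $\delta:=(t-t_0)/C>0$ because $t>t_0$ and $|s_n/\lambda_n|\le t_0$. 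Hence, for a fixed $j$,
\[
P^{(m)}:=\Psi_{j+m}H(t)\cdots\Psi_{j+1}H(t)=\begin{pmatrix}\pi_m & \beta_m\\ 0 & 1/\pi_m\end{pmatrix},\qquad \pi_m:=\prod_{i=1}^m\lambda_{j+i}>0,
\]
and since all factors have nonnegative entries, an easy induction gives $\beta_m\ge0$.

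The key step will be a telescoping formula for $\beta_m$. From $P^{(m)}=\bigl(\Psi_{j+m}H(t)\bigr)P^{(m-1)}$ one reads off $\pi_m=\lambda_{j+m}\pi_{m-1}$ and $\beta_m=\lambda_{j+m}\beta_{m-1}+\mu_{j+m}/\pi_{m-1}$; dividing the latter by $\pi_m$ and using the former telescopes, so that (with $\pi_0:=1$, $\beta_0:=0$)
\[
\beta_m=\pi_m\sum_{i=1}^m\frac{\mu_{j+i}}{\pi_{i-1}\pi_i},
\]
a sum of positive terms. Together with the trivial bound $\|P^{(m)}\|\ge\max\{\pi_m,\,1/\pi_m,\,\beta_m\}$ (the operator norm dominates each matrix entry), this says that the norm of $P^{(m)}$ can fail to be large only if all the partial products $\pi_1,\dots,\pi_m$ stay in a narrow range around $1$.

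The conclusion then follows by a case split. Fix $K$; we may assume $K\ge2$, and set $N:=\lceil 16K^4/\delta\rceil$, which depends only on $t$, $K$ and $C$, \emph{not} on $j$. For a given $j$ consider $\pi_1,\dots,\pi_N$. If $\pi_m\ge2K$ for some $m\le N$, then $\|P^{(m)}\|\ge\pi_m\ge2K>K$; if $\pi_m\le1/(2K)$ for some $m\le N$, then $\|P^{(m)}\|\ge1/\pi_m\ge2K>K$. In the remaining case $1/(2K)<\pi_i<2K$ for all $i\le N$, so $\pi_{i-1}\pi_i<4K^2$ and each summand in the formula for $\beta_N$ exceeds $\delta/(4K^2)$ (since $\mu_{j+i}\ge\delta$), whence
\[
\|P^{(N)}\|\ \ge\ \beta_N\ >\ \frac{1}{2K}\cdot\frac{N\delta}{4K^2}\ \ge\ 2K\ >\ K .
\]
In every case one of the $N$ products in~\eqref{piece} has norm larger than $K$, which is exactly the claim.

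I expect the one genuine obstacle to be uniformity in $j$. A soft argument (of Howe--Moore flavour) would already show that each single product $P^{(m)}$ eventually leaves any fixed ball, but not that a window of length $N$ independent of $j$ suffices. It is precisely the explicit sum for $\beta_m$, with its dichotomy ``either the diagonal $\pi_m$ escapes a fixed compact set, or $\beta_m$ grows at least linearly in $m$'', that upgrades this to the required uniform statement; the bookkeeping with the signs of $\lambda_n$ and the verification that $\beta_m\ge0$ are the only minor technical points.
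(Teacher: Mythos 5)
Your proof is correct and takes essentially the same route as the paper: both compute the off-diagonal entry of the upper-triangular product as $\pi_m$ times a sum whose $i$-th summand equals $(t+s_{j+i}/\lambda_{j+i})/\pi_{i-1}^2$ (your $\mu_{j+i}/(\pi_{i-1}\pi_i)$), observe each summand is bounded below by a constant when $\pi_i$ stays bounded, and conclude that the off-diagonal entry grows linearly in $m$. The only cosmetic differences are that you argue directly with an explicit $N$ rather than by contradiction, and you normalize $\lambda_n>0$ by a sign flip whereas the paper needs no such reduction since its denominators $\Pi_{j,i-1}^2$ are squares.
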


\begin{proof}
Let us fix $t>t_0$. It follows by induction in $m$ that
$$
\Psi_{j+m}H(t) \cdot \, ...\, \cdot \Psi_{j+1}H(t) =
\begin{pmatrix} \Pi_{j,m} & t\Pi_{j,m} S_{j,m}(t) \\ 0 &
\Pi_{j,m}^{-1}\end{pmatrix}
$$
where
\begin{equation}\label{Pi}
\Pi_{j,m} = \lambda_{j+1} \cdot\, ...\, \cdot \lambda_{j+m},
\end{equation}
\begin{equation}\label{Es}
S_{j,m}(t) = \left( t + \frac {s_{j+1}}{\lambda_{j+1}} \right) +
\frac{t + \frac {s_{j+2}}{\lambda_{j+2}}}{\lambda_{j+1}^2} + \,
... \, +\frac{t + \frac {s_{j+m}}{\lambda_{j+m}}}{\lambda_{j+1}^2
\cdot\, ... \,\cdot \lambda_{j+m-1}^2}.
\end{equation}
Suppose that the assertion of the lemma is wrong. Then there
exists $K>0$ such that for any $N$ one can find $j$ with the
property that all products (\ref{piece}) have norm less than $K$.
It follows that $K^{-1} < |\Pi_{j,m}| < K$ and $|S_{j,m}(t)| <
\frac{K^2}{t}.$

The denominators of the summands in the right hand side of (\ref{Es})
are equal to $\Pi_{j,k}^2$, so they do not exceed $K^2$. On the
other hand, $\displaystyle t + \frac {s_{j+1}}{\lambda_{j+1}} >
t-t_0$. Hence
$$
|S_{j,m}(t)| > m \frac{t-t_0}{K^2}.
$$
Thus $\displaystyle m \frac{t-t_0}{K^2}< \frac{K^2}{t}$. In
particular, this is true for $m = N$. But the inequality
$\displaystyle N \frac{t-t_0}{K^2}< \frac{K^2}{t}$ can not hold
for all $N$. We obtained a contradiction.
\end{proof}

Let us say that a sequence $\{a_n\}$ of complex numbers satisfies
the condition $(*)$ if
\[
\forall \varepsilon > 0 \quad {\rm and}\quad  \forall N \in
\mathbb{N} \quad \exists i \in \mathbb{N} \quad {\rm such\ that }
\quad \max_{1\le j \le N }|a_{i+j}| < \varepsilon\,. \eqno (*)
\]

\begin{lemma}\label{vozm}
Let $\Psi_*$ be a bounded sequence of upper-triangular matrices.
If a sequence of matrices $\Phi_*$ is so close to $\Psi_*$ that
$\{\|\Phi_n-\Psi_n\|\}$ satisfies $(*)$, then $\mathfrak
B(\Phi_*)\subseteq [0;t_0]$  with $t_0$ the same as in
\eqref{max}.
\end{lemma}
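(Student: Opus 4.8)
The plan is to combine Lemma \ref{rost} (which guarantees that products of the unperturbed upper-triangular blocks grow for every $t > t_0$) with the closeness hypothesis $(*)$ to show that the perturbed products $P_n(t)$ also grow, hence are unbounded, for every such $t$. Fix $t > t_0$ and suppose for contradiction that $t \in \mathfrak B(\Phi_*)$, so $\|P_n(t)\| \le M$ for all $n$ and some $M \ge 1$. I would first record the standard telescoping estimate: writing $P_n(t) = \Phi_n H(t) \cdots \Phi_1 H(t)$ and $\widetilde P_n(t)$ for the same product with each $\Phi_j$ replaced by $\Psi_j$, one gets a bound of the form
\[
\| P_{j+m}(t) \widetilde P_j(t)^{-1} - \widetilde P_{j+m}(t)\widetilde P_j(t)^{-1}\|
\le C \sum_{i=1}^{m} \|\Phi_{j+i} - \Psi_{j+i}\|,
\]
where $C = C(M, m, t, \sup_n\|\Psi_n\|, \sup_n\|\Phi_n\|)$ depends on $m$ but not on $j$; this is the routine ``difference of products'' inequality obtained by swapping one factor at a time and using $\|P_k(t)\| \le M$ together with $\|P_k(t)^{-1}\| = \|P_k(t)\| \le M$ for $SL(2,\mathbb{R})$ matrices (the same trick as in Lemma \ref{ogr}), plus the boundedness of $H(t)$ and of the two sequences of kicks.

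Next, apply Lemma \ref{rost} to the sequence $\Psi_*$ with the threshold $K := 3M^2$ (say): it produces an $N$ such that for every $j$ at least one of the blocks $\Psi_{j+m}H(t)\cdots\Psi_{j+1}H(t)$, $1 \le m \le N$, has norm exceeding $K$. Now invoke $(*)$ for this fixed $N$ and for $\varepsilon$ chosen small enough that $C \cdot N \cdot \varepsilon < M$ (here $C$ is the constant above, taken as the max over $1 \le m \le N$, which is legitimate since $N$ is now fixed): there exists $i$ with $\max_{1 \le j \le N}\|\Phi_{i+j} - \Psi_{i+j}\| < \varepsilon$. For this $i$, pick the $m \le N$ given by Lemma \ref{rost}, so that $\|\widetilde P_{i+m}(t)\widetilde P_i(t)^{-1}\| > K = 3M^2$. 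On the other hand $\|P_{i+m}(t)\widetilde P_i(t)^{-1}\| \le \|P_{i+m}(t)\|\cdot\|\widetilde P_i(t)\| $; to control $\|\widetilde P_i(t)\|$ I would note that it too is bounded, either by absorbing it into the contradiction argument or — more cleanly — by comparing $\widetilde P_i(t)$ with $P_i(t)$ via the same telescoping estimate with $j = 0$, so that $\|\widetilde P_i(t)\| \le M + C'$, and then enlarging $K$ in Lemma \ref{rost} accordingly. Then the displayed difference inequality gives
\[
\|\widetilde P_{i+m}(t)\widetilde P_i(t)^{-1}\|
\le \|P_{i+m}(t)\widetilde P_i(t)^{-1}\| + C N \varepsilon
< (\text{bounded quantity}) + M,
\]
contradicting the lower bound $> K$ once $K$ was chosen large enough. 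This forces $t \notin \mathfrak B(\Phi_*)$, i.e. $\mathfrak B(\Phi_*) \subseteq [0, t_0]$.

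The delicate point — the place where care is needed rather than where the argument is deep — is the correct bookkeeping of constants: the constant $C$ in the ``difference of products'' estimate grows with the block length $m$, so one must fix $N$ first (via Lemma \ref{rost}, whose $N$ depends only on $t$ and $t_0$ and the sequence $\Psi_*$, not on $M$ or $\varepsilon$), and only afterwards choose $\varepsilon$ in $(*)$ small relative to that now-fixed $C$ and $N$. One must also make sure the ``inner'' factor $\widetilde P_i(t)^{-1}$ that is common to both sides does not spoil the estimate; since it cancels in the difference $P_{i+m}\widetilde P_i^{-1} - \widetilde P_{i+m}\widetilde P_i^{-1}$ only after the telescoping is set up on the block from index $i$ to $i+m$, the cleanest route is to run the telescoping estimate purely on that block (treating $\widetilde P_i(t)$ as a fixed invertible matrix and factoring it out on the right), so that the perturbation terms that appear are exactly $\|\Phi_{i+j}-\Psi_{i+j}\|$ for $1 \le j \le m \le N$, all of which are $< \varepsilon$ by $(*)$. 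With that organization the proof is a short contradiction argument; no genuinely new idea beyond Lemmas \ref{ogr} and \ref{rost} is required.
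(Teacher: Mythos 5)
Your high-level plan matches the paper's: argue by contradiction, use Lemma~\ref{rost} to find a block of the unperturbed $\Psi$-product with large norm, use $(*)$ to find an index window on which $\Phi$ and $\Psi$ are uniformly close, telescope over that window, and contradict the assumed bound $M$. However, there is a genuine error in the bookkeeping that does not come out in the wash: you work throughout with $P_{i+m}(t)\widetilde P_i(t)^{-1}$, but this is \emph{not} the block product $\Phi_{i+m}H(t)\cdots\Phi_{i+1}H(t)$. That block equals $P_{i+m}(t)\,P_i(t)^{-1}$ (same $\Phi$-product on the right, not the $\Psi$-product), since $P_{i+m}=\bigl(\Phi_{i+m}H(t)\cdots\Phi_{i+1}H(t)\bigr)P_i$. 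With the correct identification, both factors are bounded by $M$ (using $\|A^{-1}\|=\|A\|$ in $SL(2,\mathbb R)$), so the block has norm $\le M^2$ with no extra work, and the need to control $\|\widetilde P_i(t)\|$ disappears entirely; this is exactly what the paper does.

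The reason this matters is that $\|\widetilde P_i(t)\|$ cannot be controlled the way you propose. Condition $(*)$ gives $\|\Phi_{i+j}-\Psi_{i+j}\|<\varepsilon$ only for $1\le j\le N$; it says nothing about $\|\Phi_j-\Psi_j\|$ for $j\le i$, and $i$ is produced by $(*)$ only after $\varepsilon$ and $N$ are fixed, so ``telescoping from $j=0$'' involves $i$ uncontrolled terms and cannot yield a bound like $\|\widetilde P_i(t)\|\le M+C'$. Worse, $\|\widetilde P_i(t)\|$ is not a priori bounded at all: Lemma~\ref{rost} itself shows that the pure $\Psi$-products have blocks of arbitrarily large norm, so ``enlarging $K$ accordingly'' is circular — $i$ depends on $N$ which depends on $K$. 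Your final paragraph, where you propose running the telescoping purely on the block $i+1,\dots,i+m$, is the right instinct, but you still describe $\widetilde P_i^{-1}$ as a factor ``common to both sides'' — it is not common, since the $\Phi$-block carries $P_i^{-1}$. Replacing $\widetilde P_i$ by $P_i$ throughout repairs the argument and reduces it to the paper's proof: choose $N$ via Lemma~\ref{rost} with $K=2M^2$, choose $\varepsilon$ small enough (depending on $N$, $\sup\|\Phi_n\|$, $\|H(t)\|$) that the telescoping error over the block is $<\delta$, invoke $(*)$, and conclude $M^2\ge\|P_{i+m}P_i^{-1}\|>2M^2-\delta$, a contradiction.
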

\begin{proof}
Assume, to the contrary, that for some $t>t_0$ there exists $M>0$
such that $\|P_n(t)\| \le M$ for all $n \in \mathbb{N}$. Applying
Lemma \ref{rost} to the sequence $\Psi_*$ with $K =2M^2$, we
obtain a positive integer $N$ such that, for any $j$, at least one
of the products (\ref{piece}) with $m\le N$ has norm larger than
$2M^2$.

Fix arbitrary $\delta>0$ and $C>1 + \sup\|\Phi_n\|$ and choose
$\varepsilon > 0$ with
$$
\varepsilon < \frac{\delta (C-1)}{(C^N-1)\|H(t)\|^N}\;.
$$
For these $N$ and $\varepsilon$, find $i$ according to condition $(*)$:
$$
\|\Phi_{i+j} - \Psi_{i+j}\| < \varepsilon  \qquad j=1,2, \ldots,
N.
$$
By our choice of $N$, there exists $m$, $1 \leq m \leq N$, for
which
$$
\|\Psi_{i+m}H(t)\cdot \, ... \, \cdot\Psi_{i+1}H(t) \| > K\,.
$$
Now, we estimate the product $ \Phi_{i+m}H(t)\cdot \, ... \,
\cdot\Phi_{i+1}H(t)$. On the one hand, it is close to
$\Psi_{i+m}H(t)\cdot \,...\, \cdot \Psi_{i+1}H(t)$:
\begin{multline*}
\|\Phi_{i+m}H(t)\cdot\,...\, \cdot\Phi_{i+1}H(t)  - \Psi_{i+m}H(t)
\cdot\,...\,\cdot\Psi_{i+1}H(t)\|
\\ \\
\le
\|\Phi_{i+m}H(t)\cdot\,...\,\cdot\Phi_{i+2}H(t)(\Phi_{i+1}-\Psi_{i+1})H(t)\|\,
\\ \\
+ \, \|\Phi_{i+m}H(t)  \cdot\,...\,\cdot
\Phi_{i+2}H(t)\Psi_{i+1}H(t)-\Psi_{i+m}H(t)\cdot\,...\,\cdot\Psi_{i+1}H(t)\|
\\ \\
\le \, ...\, \le
\|\Phi_{i+m}H(t)\cdot\,...\,\cdot\Phi_{i+2}H(t)(\Phi_{i+1}-\Psi_{i+1})H(t)\|\,
\\ \\
+ \, \|\Phi_{i+m}H(t)
\cdot\,...\,\cdot(\Phi_{i+2}-\Psi_{i+2})H(t)\Psi_{i+1}H(t)\|
\\ \\
+\, ...\, + \|(\Phi_{i+m}-\Psi_{i+m})H(t)\Psi_{i+m-1}H(t) \cdot \,
...\, \cdot\Psi_{i+1}H(t)\|
\\ \\
\leq \varepsilon  (C^{m-1} + C^{m-2} + \,...\, + 1)\|H(t)\|^m \leq
\frac{\varepsilon (C^m-1)\|H(t)\|^m}{C-1} < \delta.
\end{multline*}
Therefore
\begin{equation}\label{larger}
\|\Phi_{i+m}H(t)\cdot\,...\,\cdot\Phi_{i+1}H(t)\| \geq
\|\Psi_{i+m}H(t) \cdot\,...\,\cdot\Psi_{i+1}H(t)\| -
\delta > 2M^2 - \delta .
\end{equation}

On the other hand,
\begin{multline*}
\|\Phi_{i+m}H(t) \cdot\,...\,\cdot \Phi_{i+1}H(t)\| =
\|P_{i+m}(t)\left( P_{i}(t)\right)^{-1}\|
\\ \\
\leq  \|P_{i+m}(t)\|\cdot\|\left( P_{i}(t)\right)^{-1}\| =
\|P_{i+m}(t)\|\cdot\|P_i(t)\| \leq M^2.
\end{multline*}
which contradicts (\ref{larger}).
\end{proof}

We will also need two auxiliary results from the classical
potential theory in the spirit of Wiener's criterion \cite{L}. Let
$\Omega$ be a bounded domain in the complex plane, $z_0\in
\Omega$. Recall that the harmonic measure $\omega$ on the boundary
$\partial \Omega$ respect to a point $z_0$ is defined by the condition
\begin{equation}\label{harmeas}
u(z_0)= \int_{\partial \Omega}u(z)d\omega ,
\end{equation}
for all harmonic continuous function on $\overline{\Omega}$, and
that for subharmonic $u$ the equality should be changed by the
inequality
\begin{equation}\label{subharmeas}
u(z_0)\le \int_{\partial \Omega}u(z)d\omega.
\end{equation}
We will denote by $|E|$ the Lebesgue measure of a set $E\subset
\mathbb{R}$.

\begin{lemma}\label{set}
Let $E$ be a closed subset of $[1,+\infty]$ of infinite length
with the property that
\begin{equation}\label{inters}
|E\cap [a,4a]| \le 1\;\;\; for\;each\;a>0.
\end{equation}
Fix $z_0\in \mathbb{C}\setminus E$.  Take $R > |z_0|$ and consider
the domain
$$\Omega_R = \{z\in \mathbb{C}: |z|<2R, z\notin E_{1,R}\},$$
where by $E_{a,b}$ we denote $E\cap [a,b]$. Then the harmonic
measure $\omega_R$  on  $\partial\Omega_R$, associated with the
point $z_0$, satisfies the condition
$$
\lim_{R\to\infty}\omega_R(T_R)\log(1+2R) = 0
$$
where $T_R = \partial \Omega_R \cap \{|z|=2R\}$.

\end{lemma}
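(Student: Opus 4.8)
\emph{Approach.} I will estimate $\omega_R(T_R)$ from above by comparison with the Green's function of $\mathbb C\setminus E_{1,R}$ with pole at infinity. Write $g_R:=g_{\mathbb C\setminus E_{1,R}}(\,\cdot\,,\infty)$; it is positive and harmonic on $\mathbb C\setminus E_{1,R}$, vanishes on $E_{1,R}$ (all points of $E_{1,R}$ are regular, since $E_{1,R}$ has positive density in each block $[4^k,4^{k+1}]$ that it meets), and $g_R(z)=\int\log|z-w|\,d\mu_R(w)-\log\mathrm{cap}(E_{1,R})$, with $\mu_R$ the equilibrium measure of $E_{1,R}$. Set $m_R:=\min_{|z|=2R}g_R(z)>0$. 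Since $g_R/m_R$ is harmonic in $\Omega_R$, vanishes on $E_{1,R}$ and is $\ge1$ on $T_R$, the maximum principle gives $\omega_R(T_R)\le g_R(z_0)/m_R$. Everything therefore reduces to two facts: (a) $m_R\ge\tfrac13\log R$ for all large $R$; and (b) $g_R(z_0)\to0$ as $R\to\infty$. Granting these, for large $R$ one gets $\omega_R(T_R)\log(1+2R)\le 3\,g_R(z_0)\,\log(1+2R)/\log R\le 6\,g_R(z_0)\to0$, which is the assertion.

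\emph{Step (a).} If $|z|=2R$ and $w\in E_{1,R}\subset[1,R]$ then $|z-w|\ge R$, so the integral formula for $g_R$ gives $m_R\ge\log R-\log\mathrm{cap}(E_{1,R})$. Now the sparseness hypothesis $(\ref{inters})$ forces $|E_{1,R}|=\sum_k|E\cap[4^k,4^{k+1}]|\le 1+\log_4R$, while $\mathrm{diam}(E_{1,R})\le R$; combining this with the elementary bound $\mathrm{cap}(S)\le\tfrac14\sqrt{2\,|S|\,\mathrm{diam}(S)}$ for compact $S\subset\mathbb R$ (a symmetrization estimate: among subsets of an interval of given length, logarithmic capacity is maximized by pushing the Lebesgue measure to the two endpoints) one obtains $\mathrm{cap}(E_{1,R})\le\tfrac14\sqrt{2R(1+\log_4R)}=o(R)$, so $\log\mathrm{cap}(E_{1,R})\le\tfrac23\log R$ eventually and (a) follows.

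\emph{Step (b).} As $R$ grows $E_{1,R}$ increases to $E$, so $g_R(z_0)$ decreases and has a limit; by Wiener's criterion at $\infty$ (equivalently, at $0$ after the inversion $z\mapsto1/z$) this limit is $0$ precisely when the series $\sum_{n\ge1}n\big/\log\!\big(2^n/\mathrm{cap}(E\cap[2^n,2^{n+1}])\big)$ diverges. Put $\mu_n:=|E\cap[2^n,2^{n+1})|$; then $\mu_n\le1$ by $(\ref{inters})$ and $\sum_n\mu_n=|E|=\infty$. Using $\mathrm{cap}(E\cap[2^n,2^{n+1}])\ge\mu_n/4$, the $n$-th term is at least $n\big/\big(n\log2+\log(1/\mu_n)+\log4\big)$. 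If the series converged its terms would tend to $0$, forcing $\log(1/\mu_n)/n\to\infty$, hence $\mu_n\le e^{-n}$ for large $n$, contradicting $\sum_n\mu_n=\infty$. So the series diverges, $\infty$ is a regular boundary point of $\mathbb C\setminus E$, and $g_R(z_0)\to0$.

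\emph{Where the difficulty lies.} The delicate point is Step (b): one must verify that the right form of Wiener's criterion governs $\lim_R g_R(z_0)$ and that the capacity estimates feeding into it are set up correctly. It is also instructive to see why both hypotheses are used rather than just $E\ne\varnothing$. A single obstacle already shows $\omega_R(T_R)\asymp1/\log R$ (so $\omega_R(T_R)\log(1+2R)\not\to0$), by the same Green's-function comparison; the extra decay comes precisely from the combination of $|E|=\infty$ — infinitely many scales contribute, which is exactly what makes $\infty$ regular and hence $g_R(z_0)\to0$ — with $(\ref{inters})$ — which keeps $\mathrm{cap}(E_{1,R})$ negligible next to $R$, so that $m_R\asymp\log R$ and the factor $\log(1+2R)$ is absorbed.
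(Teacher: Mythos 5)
Your route is genuinely different from the paper's. The paper works with the explicit auxiliary potential
$U(z)=\int_{E_{1,R}}\log\bigl|1-\tfrac{z}{t}\bigr|\,dt$ (the logarithmic potential of \emph{Lebesgue} measure on $E_{1,R}$, up to a constant), proves three direct estimates for it ($|U(z_0)|$ uniformly bounded, $U\ge -C_2$ uniformly on $E_{1,R}$, and $U\gtrsim \int_{E_{1,R}}\log|1-2R/t|\,dt$ on $T_R$), and then reads the conclusion off the harmonic-measure identity; the hypothesis $|E|=\infty$ enters only at the last step. You instead compare $\omega_R(T_R)$ with the Green's function $g_R$ of $\mathbb{C}\setminus E_{1,R}$ with pole at $\infty$ (equivalently, with the equilibrium potential), and push the work into two classical black boxes. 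The comparison step $\omega_R(T_R)\le g_R(z_0)/m_R$ is correct and tidy, and the arithmetic that absorbs the $\log(1+2R)$ once $m_R\gtrsim\log R$ is fine. But both of the black boxes you invoke need more than you give them.

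First, in Step (a) you rely on the inequality $\mathrm{cap}(S)\le\tfrac14\sqrt{2\,|S|\,\mathrm{diam}(S)}$ for compact $S\subset\mathbb{R}$. This is in fact sharp (it is attained asymptotically by two short intervals at the endpoints of $[0,D]$, as one sees from $\mathrm{cap}([-1,-k]\cup[k,1])=\tfrac12\sqrt{1-k^2}$), but it is not a textbook statement, and the parenthetical ``symmetrization'' justification is itself an unproved assertion about the extremal configuration. Since the whole of Step~(a) hinges on getting $\mathrm{cap}(E_{1,R})=O(R^{1-\epsilon})$, you need either a proof of this bound or a precise reference; as written it is a gap. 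Second, and more seriously, in Step (b) you assert that $\lim_R g_R(z_0)=0$ ``precisely when'' the Wiener series at $\infty$ diverges. The textbook Wiener criterion is a statement about boundary regularity, i.e.\ about $G_{\mathbb{C}\setminus E}(z,z_0)\to 0$ as $z\to\infty$ with the set $E$ \emph{fixed} and the variable point escaping to $\infty$. What you need is the different statement that the Green's functions $g_R$ of the \emph{truncations} $\mathbb{C}\setminus E_{1,R}$, with pole at $\infty$, decrease to $0$ at a fixed $z_0$. These are equivalent — this is the characterization of when $\mathbb{C}\setminus E$ is Greenian at $\infty$, i.e.\ when $E$ is thin at $\infty$ — but that equivalence is a separate theorem in the theory of thinness, not an immediate rephrasing of the Wiener criterion, and you give no argument or citation for it. Note also that, via $g_R(z_0)\asymp m_R\,\omega_R(T_R)\asymp(\log R)\,\omega_R(T_R)$, the statement $g_R(z_0)\to 0$ is essentially \emph{equivalent} to the lemma, so quoting it as known without a precise source is dangerously close to circular. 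The paper deliberately avoids this: it proves the lemma from scratch with the Lebesgue-measure potential, precisely so as not to have to import thinness-at-infinity or a sharp capacity estimate as black boxes. Your outline is sound, but to make it a proof you must either supply proofs/references for the capacity bound and for the Greenian-at-$\infty$ form of Wiener's criterion, or fall back on the paper's more elementary explicit potential.
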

\begin{proof}
Observe, first of all, that (\ref{inters}) implies that for each
$a>0$,
$$\int_{E_{a,\infty}}\frac{dt}{t} \le
\sum_{k=0}^{\infty}\frac{1}{4^ka}|E_{4^ka,4^{k+1}a}| \le
\frac{4}{3a}.
$$
Consider the auxiliary potential
$$U(z) = \int_{E_{1,R}}\log \left|1-\frac{z}{t}\right|dt.
$$
Notice that $U\in C(\overline{\Omega_R})$ \footnote{This follows,
for example, from the Continuity Principle (see \cite[section~
3.1] {Rans})} and $U$ is harmonic in $\Omega_R$. Since
$\partial\Omega_R$ consists of the circumference $T_R$ of radius
$2R$ centered at $0$ and the set $E_{1,R}$, we have
\begin{equation}\label{harmonic}
U(z_0)= \int_{T_R}U(z)d\omega_R(z) +
\int_{E_{1,R}}U(z)d\omega_R(z).
\end{equation}
Hence
\begin{equation}\label{ineq}
\int_{T_R}U(z)d\omega_R(z) \le |U(z_0)| -
\int_{E_{1,R}}U(z)d\omega_R(z).
\end{equation}
It follows from the definition of $U$ that
\begin{equation}\label{N-1}
|U(z_0)|\leq \int_{E_{1,R}}\log \left\{1+
\frac{|z_0|}{t}\right\}dt \leq |z_0|\int_{E_{1,R}}\frac{dt}{t}
\leq \frac{4}{3}|z_0|.
\end{equation}
Choose $b>0$ so that $|E_{1,b}| = 1$. We may suppose that $R
> 2+b+|z_0|$.

We claim that for every $z\in E_{1,R}$,
\begin{equation*}\aligned
U(z) \geq \int_{E_{1,b}}\log \left|1- \frac{z}{t}\right|dt & +
\int_{E_{\frac{1}{2}z, 2z}}\log \left|1- \frac{z}{t}\right|dt + \\
& \int_{E_{2z,+\infty}}\log \left|1- \frac{z}{t}\right| dt =
\mathcal{J}_1+\mathcal{J}_2+ \mathcal{J}_3
\endaligned
\end{equation*}
The reason for this inequality is that $\mathcal{J}_2$ and
$\mathcal{J}_3$ together give exactly the integral of {\em all}
negative values of the function $t \rightarrow \log \left| 1-
\frac{z}{t}\right|$ on $E_{1, +\infty}$, so the extension of the
upper limit from $R$ to $+\infty$ in $\mathcal{J}_3$ and possible
overlapping with $\mathcal{J}_1$ are not problems: essentially
what is said is that the integral of a real-valued function over a
set $F$ is not less than its integral over any subset of $F$ plus
the integral of all its negative values over any superset of $F$.

Observe that, since $\log|1-x|\geq-2x$ for $0 < x < 1/2$, we have
$$
\mathcal{J}_3 \geq -2z \int_{E_{2z, +\infty}} \frac{dt}{t} \geq
-\frac{4}{3}
$$
regardless of $z$. We also have (recall that $|E_{1, b}|=1$)
$$
\mathcal{J}_1 - \log z = \int_{E_{1,b}}\log \left|\frac{1}{z}-
\frac{1}{t}\right|\ dt \rightarrow - \int_{E_{1,b}}\log t\ dt
\qquad as \ \ z \rightarrow +\infty.
$$
Since $\mathcal{J}_1-\log z$ is a continuous function on $[1,
+\infty)$, we get $\mathcal{J}_1 \geq \log z -C_1$ where $C_1$ is
some large constant independent on $R$. Next,
\begin{equation*}\aligned
\mathcal{J}_2 = \int_{E_{\frac{1}{2}z, 2z}}& \log |t- z|\ dt  -
\int_{E_{\frac{1}{2}z, 2z}}\log t \ dt \geq \\
& \int_{[z-1, z+1]} \log |t-z|\ dt - \log(2z)|E_{\frac{1}{2}z,
2z}| \geq -2 -\log z - \log 2
\endaligned
\end{equation*}
(here, we used the inequality $|E_{\frac{1}{2}z, 2z}| \leq 1$).
Therefore, $U(z) \geq -C_1 -10/3 - \log 2 = -C_2$ on $E_{1, R}$
and, taking into account that $\omega_R(E_{1, R}) \leq
\omega_R(\partial\Omega)=1$, we get
\begin{equation}\label{N-2}
\int_{E_{1,R}} U(z)\ d\omega_R(z) \geq -C_2\omega_R(E_{1,R}) \geq
-C_2.
\end{equation}
Now by (\ref{ineq}),
\begin{equation}\label{ineq1}
\int_{T_R}U(z)d\omega_R(z) \le C_3,
\end{equation}
where $C_3=C_2+\frac{4}{3}|z_0|$.

The last observation we need is that for $z \in T_R$,
\begin{equation}\label{N-3}
U(z)\geq \int_{E_{1,R}} \log \left| 1- \frac{2R}{t}\right|dt.
\end{equation}
So (\ref{ineq1}) gives
$$
\omega_R(T_R)\leq C_3 \left\{\int_{E_{1,R}} \log \left| 1-
\frac{2R}{t}\right|dt \right\}^{-1}.
$$
Thereby,
\begin{multline*}
\omega_R(T_R)\log(1+2R) \leq C_3 \left\{ \frac1{\log (1+2R)}
\int_{E_{1,R}} \left| 1- \frac{2R}{t}\right|\, dt \right\}^{-1}\\
= \left\{\int_{E_{1,R}}L_R(t)dt \right\}^{-1}.
\end{multline*}
Note that $0 \leq L_R(t) \leq 1$ for each $t \in E_{1,R} $ and
$L_R(t)\rightarrow 1$ as $R\rightarrow +\infty $ for every fixed
$t \in E$. Thereby, $\int_{E_{1,R}}L_R(t)dt \rightarrow |E| =
+\infty$,
and we are done.
\end{proof}

\begin{lemma}\label{harmon}
Let $u_n(z)$ be a sequence of continuous subharmonic functions
satisfying the estimate $u_n(z) \leq \log (1+|z|) + A$ for all
$n\geq1$, $z \in \mathbb{C}$ and some $A>0$. If
$E\subset\mathbb{R}$ has infinite length and
$\limsup_{n\rightarrow\infty}u_n(z) \leq 0$ for all $z \in E$,
then $\limsup_{n\rightarrow\infty}u_n(z) \leq 0$ for all $z \in
\mathbb{C}$.
\end{lemma}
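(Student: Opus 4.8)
The plan is to combine the subharmonic mean–value inequality over a family of slit disks with the harmonic–measure bound of Lemma \ref{set}. If $z_0\in E$ there is nothing to prove, so fix $z_0\in\mathbb{C}\setminus E$. Since a bounded set has finite length, $E$ is unbounded; and after possibly replacing $E$, $z_0$ and $u_n(z)$ by $-E$, $-z_0$ and $u_n(-z)$ — an operation that preserves subharmonicity, the majorant $\log(1+|z|)+A$, and the hypothesis on $E$ — I may assume $|E\cap[1,+\infty)|=\infty$.

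The next step is to replace $E$ by a closed subset of $[1,+\infty)$ fitting the hypotheses of Lemma \ref{set}. Partition $[1,+\infty)$ into the blocks $I_k=[4^k,4^{k+1})$, $k\ge0$. Since $\sum_k|E\cap I_k|=\infty$ and, for any $c>0$, $\sum_k a_k=\infty$ forces $\sum_k\min(c,a_k)=\infty$ (the indices with $a_k\ge c$ are finitely many, and on the rest $\min(c,a_k)=a_k$), inner regularity of Lebesgue measure lets me choose compact sets $K_k\subseteq E\cap I_k$ with $|K_k|\le\tfrac12$ and $\sum_k|K_k|=\infty$. Put $E'=\bigcup_k K_k$. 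Then $E'$ is closed (any bounded portion of it lies in finitely many $I_k$, hence in a finite union of compacta), $E'\subseteq[1,+\infty)$, $|E'|=\infty$, and $|E'\cap[a,4a]|\le1$ for every $a>0$ since $[a,4a]$ meets at most two of the blocks $I_k$. As $E'\subseteq E$, we still have $\limsup_n u_n\le0$ on $E'$ and $z_0\notin E'$, so it suffices to prove the conclusion with $E'$ in place of $E$.

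Now fix $R>|z_0|$ and let $\Omega_R=\{z:|z|<2R\}\setminus E'_{1,R}$, a bounded domain containing $z_0$, with boundary $T_R\cup E'_{1,R}$; write $\omega_R$ for the harmonic measure of $\Omega_R$ at $z_0$. By \eqref{subharmeas},
\[
u_n(z_0)\le\int_{T_R}u_n\,d\omega_R+\int_{E'_{1,R}}u_n\,d\omega_R .
\]
On $T_R$ the bound $u_n(z)\le\log(1+2R)+A$ gives $\int_{T_R}u_n\,d\omega_R\le(\log(1+2R)+A)\,\omega_R(T_R)$, a quantity that does not depend on $n$. On $E'_{1,R}$ we have $u_n\le\log(1+R)+A$ uniformly in $n$, so the reverse Fatou lemma applied on the finite measure space $(E'_{1,R},\omega_R)$ yields
\[
\limsup_{n\to\infty}\int_{E'_{1,R}}u_n\,d\omega_R\le\int_{E'_{1,R}}\limsup_{n\to\infty}u_n\,d\omega_R\le0 .
\]
Taking $\limsup_{n\to\infty}$ in the displayed inequality therefore gives $\limsup_{n\to\infty}u_n(z_0)\le(\log(1+2R)+A)\,\omega_R(T_R)$ for every $R>|z_0|$. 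By Lemma \ref{set}, $\omega_R(T_R)\log(1+2R)\to0$ as $R\to\infty$, whence also $\omega_R(T_R)\to0$; so letting $R\to\infty$ forces $\limsup_{n\to\infty}u_n(z_0)\le0$, as required.

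The one point that needs care is the order of the two limits: one must first send $n\to\infty$ for fixed $R$ — which is exactly where the uniform majorant $\log(1+|z|)+A$ is used, both to control the circle term and to license reverse Fatou on $E'_{1,R}$ — and only then let $R\to\infty$, at which stage the input from Lemma \ref{set} is fed into an expression already constant in $n$. The reduction to a set $E'$ of the special form demanded by Lemma \ref{set} is routine measure theory, but must be done carefully enough to keep $E'$ closed while retaining infinite length.
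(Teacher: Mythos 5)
Your proof is correct and follows essentially the same route as the paper's: reduce to a closed subset of $[1,+\infty)$ of infinite length satisfying the $|E\cap[a,4a]|\le 1$ condition, apply the subharmonic mean-value inequality over $\Omega_R$, control the $E'_{1,R}$ integral by reverse Fatou, control the $T_R$ integral by the uniform majorant, and then let $R\to\infty$ via Lemma~\ref{set}. The only differences are cosmetic -- you cap each block at $1/2$ over geometric blocks of ratio $4$ whereas the paper caps at $1/3$ over dyadic blocks, and you are explicitly careful to keep the regularized set closed via compact pieces, a point the paper glosses over.
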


\begin{proof}
Since every measurable set of infinite length contains a
closed subset of infinite length we may assume without loss of
generality that $E$ is closed. Also we may assume that $|E\cap
[1,+\infty)| = +\infty$. Indeed, otherwise $E\cap[-\infty,-1]| =
+\infty $ and we may consider the set $-E$ and functions $u_n(-z)$
instead. Thus we can always assume that $E \subset [1,+\infty]$.
The last regularization we need is the following. Take any dyadic
interval $I_k = [2^{k-1},2^k] (k = 1,2,...)$. If $|E\cap I_k| <
1/3$, leave the corresponding piece of $E$ alone. Otherwise
replace it by some subset of length exactly $1/3$. The resulting
set still has infinite length. Indeed if we made finitely many
replacements, we dropped only a set of finite length from $E$, and
if we made infinitely many replacements, we have infinitely many
disjoint pieces of length  $1/3$  in the resulting set. After such
regularization, the set $E$ enjoys the property (\ref{inters}). We
will use the notation introduced in the previous Lemma.

Choose $z_0 \in \mathbb{C}$; we have to prove that
$\limsup_{n\to\infty}u_n(z_0) \le 0$. This is evident if $z_0\in
E$, so we assume that $z_0\in \mathbb{C}\setminus E$.

For $R> |z_0|$, we have, by (\ref{subharmeas}),
$$u_n(z_0)\le \int_{\partial\Omega_R}u_n(z)d\omega_R(z) =
\int_{T_R}u_n(z)d\omega_R(z) + \int_{E_{1,R}}u_n(z)d\omega_R(z).$$
Note that, for fixed $R$, the length of $E_{1,R}$ is finite, $u_n$
are uniformly bounded from above on $E_{1,R}$, and $\limsup_{n\to
\infty}u_n(z)\le 0$ for all $z\in E_{1,R}$. Therefore, the Fatou
lemma yields
$$\limsup_{n\to\infty}\int_{E_{1,R}}u_n(z)d\omega_R(z)\le 0$$
and thereby
$$\limsup_{n\to\infty}u_n(z_0) \le \sup_{n\ge
1}\int_{T_R}u_n(z)d\omega_R(z)\le \omega_R(T_R)(\log(1+2R)+A)$$
for any fixed $R$. By Lemma~\ref{set}, the result follows.
\end{proof}

\section{The proof of Theorem \ref{main}}

Now
we can prove Theorem \ref{main}.

\begin{proof}
We use Iwasawa's decomposition of $2\times 2$ matrices:
\begin{multline*}
\Phi_n=\begin{pmatrix} a_n & b_n \\ c_n & d_n\end{pmatrix}=
\begin{pmatrix} 1 & s_n \\ 0 & 1\end{pmatrix}
\begin{pmatrix} \lambda_n & 0 \\ 0 & \frac{1}{\lambda_n}\end{pmatrix}
\begin{pmatrix} \cos \alpha_n & -\sin \alpha_n \\ \sin \alpha_n  & \cos
\alpha_n\end{pmatrix} \\
= H(s_n)D(\lambda_n) R(\alpha_n),
\end{multline*}
where $\displaystyle \alpha_n = \arcsin\frac{c_n {\rm
sign}(d_n)}{\sqrt{c_n^2 + d_n^2}} \in \left[-\frac{\pi}{2}, \frac{\pi}{2} \right]$,
$\displaystyle s_n = \frac{a_n c_n+b_n d_n}{c_n^2 + d_n^2}$, $\displaystyle \lambda_n =
\frac{{\rm sign}(d_n)}{\sqrt{c_n^2 + d_n^2}}$.

By Lemma~\ref{ogr}, we may assume that the system $\Phi_*$ is
bounded. It follows that  both sequences $\{H(s_n)\}$ and
$\{D(\lambda_n)\}$ are bounded. Indeed,
$$
s_n = \frac{(a_n, b_n)\cdot(c_n, d_n)}{\|(c_n, d_n)\|^2} \leq
\frac {\|(a_n, b_n)\|}{\|(c_n, d_n)\|} = \frac {\|(a_n,
b_n)\|}{\|(d_n, -c_n)\|} \leq  \frac {\|(a_n, b_n)\|}{1/\|(a_n,
b_n)\|} \leq C^2
$$
where $C > 1+ \sup \{\|\Phi_n\|\}$. Thus $\{H(s_n)\}$ is a bounded
sequence. The boundedness of $\{D(\lambda_n)\}$ follows from the
equality
$$
D(\lambda_n) = H(s_n)^{-1}\Phi_n R(\alpha_n)^{-1}.
$$

Now denote
$$
q=\limsup_{n\rightarrow\infty}\frac{1}{n}\sum_{j=1}^n |\alpha_j|
$$
and consider two cases separately: $q=0$ and $q>0$. In both cases
we will prove stronger statements than the assertion of
Theorem~\ref{main}.

\medskip\par\noindent\textbf{Case A: $q=0$.} We will show that in
this case the set $\mathfrak B(\Phi_*)$ is bounded. Note, that in
this case the sequences $\{\alpha_n \}$ and hence $\{\sin \alpha_n
\}$ satisfy condition $(*)$.

Since
\begin{multline*}
\|\Phi_n - H(s_n)D(\lambda_n) \| \leq
\|H(s_n)\|\|D(\lambda_n)\|
\left|\left|\begin{pmatrix} \cos (\alpha_n) -1 & -\sin \alpha_n \\
\sin \alpha_n  & \cos(\alpha_n) -1
\end{pmatrix} \right|\right|
\end{multline*}
$$
\le 2|\sin \alpha_n|\|H(s_n)\|\|D(\lambda_n)\|,
$$
the sequence $\Phi_*$ is close in $(*)$-sense to the sequence of
upper-triangular matrices $\{H(s_n)D(\lambda_n)\}$, i.e, the
sequence of norms $ \|\Phi_n - H(s_n)D(\lambda_n) \|$ satisfies
$(*)$. Thus, according to Lemma~\ref{vozm}, the sequence of
evolutions $\{P_{n}(t)\}$ is unbounded for every $t>t_0$, where
$$
t_0=\max\{|s_n/\lambda_n^2|, n \in \mathbb{N}\}.
$$
This completes the proof in the case~A. \hfill$\Box$

\medskip\par\noindent\textbf{Case B: $q>0$.} Extending
our problem to the complex plane, we consider $SL(2,\mathbb
C)$-matrices $H(z)=\begin{pmatrix} 1 & z \\ 0 & 1\end{pmatrix}$
instead of $H(t)$. Respectively, $P_n(z)=\Phi_n H(z)...\Phi_1
H(z)$. We will show that the set
$$
E=\{z\in\mathbb{C}\colon \limsup_{n\rightarrow\infty}
\frac{\log\|P_{n}(z)\|}{n}=0 \}
$$
is contained in $\mathbb{R}$ and has finite length. So we not only
prove that the sequence $P_*$ is unbounded  but prove that it has
exponential growth for all $t$ apart of a set of finite measure.

Our first task is to show that
\begin{equation}\label{complex}
\limsup_{n\rightarrow\infty} \frac{\log\|P_n(z)\|}{n}>0,  \qquad z
\in \mathbb{C}\setminus\mathbb{R}.
\end{equation}
Assume that ${\rm Im} z >0$ (the case ${\rm Im} z <0$ can be
considered in a similar way). Let us consider the quadratic form
$$
Q(x) = {\rm Im} (x_1\bar{x}_2), \qquad  x=(x_1,x_2) \in
\mathbb{C}^2.
$$
For any matrix $A \in SL(2,\mathbb{R})$ and $x \in \mathbb{C}^2$,
one has $Q(Ax)=Q(x)$. On the other hand, for every $z\in
\mathbb{C}$ with ${\rm Im} z > 0$, one has
$$
Q(H(z)x) = Q (x) + {\rm Im} z |x_2|^2 \geq Q (x)\left( 1 + {\rm
Im} z \frac{|x_2|}{|x_1|} \right).
$$

Now, we need one more lemma.
\begin{lemma}\label{oml}
Let $\alpha_n \in [-\pi/2, \pi/2]$, then there exists $k \geq 1$
such that for all $n \in \mathbb{N}$
\begin{equation}\label{eq_21}
Q \left( H(z)H(s_n)D(\lambda_n)R(\alpha_n)H(z)x\right)\geq
Q(x)\left( 1 + \frac{|\alpha_n|{\rm Im} z}{2k(1+ |z|)} \right).
\end{equation}
\end{lemma}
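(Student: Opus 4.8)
\textbf{Proof plan for Lemma~\ref{oml}.}

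The plan is to track how the quadratic form $Q$ changes under the composite map $H(z)H(s_n)D(\lambda_n)R(\alpha_n)H(z)$. Since $H(s_n)$ and $D(\lambda_n)$ lie in $SL(2,\mathbb R)$, they preserve $Q$, so I can absorb them: writing $y = D(\lambda_n)R(\alpha_n)H(z)x$ and using $Q(H(z)H(s_n)y') = Q(H(z)y')$ for $y' = H(s_n)^{-1}(\cdots)$ — more precisely, the two real matrices commute past $Q$ — the real content is to estimate $Q(H(z)\,R(\alpha_n)\,H(z)\,x)$ from below, since $Q$ is also invariant under the middle real factors. So the lemma reduces to showing
\[
Q\bigl(H(z)R(\alpha_n)H(z)x\bigr) \ge Q(x)\Bigl(1 + \frac{|\alpha_n|\,{\rm Im}\,z}{2k(1+|z|)}\Bigr)
\]
for a suitable absolute $k$. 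First I would apply the identity $Q(H(z)v) = Q(v) + {\rm Im}\,z\,|v_2|^2$ twice: once to the outer $H(z)$ with $v = R(\alpha_n)H(z)x$, getting $Q(R(\alpha_n)H(z)x) + {\rm Im}\,z\,|(R(\alpha_n)H(z)x)_2|^2$; since $R(\alpha_n) \in SL(2,\mathbb R)$ preserves $Q$, the first term is $Q(H(z)x) = Q(x) + {\rm Im}\,z\,|x_2|^2$. So
\[
Q\bigl(H(z)R(\alpha_n)H(z)x\bigr) = Q(x) + {\rm Im}\,z\bigl(|x_2|^2 + |(R(\alpha_n)H(z)x)_2|^2\bigr).
\]

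Next I would compute the second component of $R(\alpha_n)H(z)x$. With $w = H(z)x = (x_1 + zx_2,\, x_2)$, the bottom entry of $R(\alpha_n)w$ is $\sin\alpha_n\,(x_1+zx_2) + \cos\alpha_n\, x_2$. The goal is a lower bound on $|x_2|^2 + |\sin\alpha_n(x_1+zx_2)+\cos\alpha_n x_2|^2$ in terms of $Q(x) = {\rm Im}(x_1\bar x_2)$ and the relevant magnitudes. The key point is that this sum of two squares cannot be small unless both $|x_2|$ is small \emph{and} $|\sin\alpha_n|\,|x_1+zx_2|$ is small; but if $|x_2|$ is small relative to $|x_1|$, then $|x_1+zx_2| \ge |x_1| - |z||x_2|$ is comparable to $|x_1|$, forcing $|\sin\alpha_n|\,|x_1|$ to be small. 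Since $Q(x) \le |x_1||x_2|$, controlling the product $|x_1||x_2|$ is what matters. Quantitatively I expect to prove $|x_2|^2 + |(R(\alpha_n)H(z)x)_2|^2 \ge c\,|\alpha_n|\,|x_1||x_2|/(1+|z|)$ for an absolute constant $c$ (using $|\sin\alpha_n| \ge \tfrac{2}{\pi}|\alpha_n|$ on $[-\pi/2,\pi/2]$ and splitting into the cases $|x_2| \ge \eta|x_1|/(1+|z|)$ and $|x_2| < \eta|x_1|/(1+|z|)$ for a well-chosen threshold $\eta$ depending on $|\alpha_n|$). Combining with the displayed identity and $Q(x)\le|x_1||x_2|$ gives the claim with $k$ absorbing $c$ and the factor $2$.

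The main obstacle is the two-case magnitude estimate in the previous paragraph: one must choose the threshold separating "$|x_2|$ large" from "$|x_2|$ small" carefully so that in \emph{both} regimes the sum of squares dominates a fixed multiple of $|\alpha_n|\,|x_1||x_2|/(1+|z|)$, and one must handle the phase of $x_1+zx_2$ (the worst case being when $zx_2$ nearly cancels $x_1$, which however can only happen when $|x_2|$ is already comparable to $|x_1|/|z|$, i.e.\ not in the "small" regime once the threshold is set below $1/(1+|z|)$ times $|x_1|$). A minor subtlety is that the statement should be read as holding for all $x$, including $x_1 = 0$, where both sides behave correctly since $Q(x)=0$ then and the right side is $\le 0$... actually $Q(x)=0$ makes the inequality trivial, and when $Q(x)<0$ one checks the added nonnegative term $ {\rm Im}\,z(\cdots)$ only helps; the real work is the case $Q(x)>0$. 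Once $k$ is fixed as an absolute constant, substituting $z \mapsto z/2$ (as done in the outline, where $H(z/2)\Phi_n H(z/2)$ appears) produces the $(1+|z|)$ in the denominator with the same shape, so no separate argument is needed there.
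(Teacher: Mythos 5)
Your reduction step contains a genuine error, and it hides the one place where the (not absolute) constant $k$ comes from. You claim that since $H(s_n)$ and $D(\lambda_n)$ preserve $Q$, the left-hand side reduces to $Q\bigl(H(z)R(\alpha_n)H(z)x\bigr)$, and you then write the identity
\[
Q\bigl(H(z)R(\alpha_n)H(z)x\bigr)=Q(x)+\operatorname{Im}z\bigl(|x_2|^2+|(R(\alpha_n)H(z)x)_2|^2\bigr).
\]
That identity itself is fine, but it is not what the lemma asks about. While $Q(D(\lambda_n)v)=Q(v)$, the quantity that actually appears is $Q\bigl(H(z)D(\lambda_n)v\bigr)$, and these are \emph{not} equal: the increment produced by the outer $H(z)$ is $\operatorname{Im}z\,|w_2|^2$ where $w$ is the vector $H(z)$ is applied to, and $D(\lambda_n)$ rescales the second coordinate by $1/\lambda_n$. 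Carrying this through correctly (only $H(s_n)$ can be silently dropped, since it leaves the second coordinate alone) gives
\[
Q\bigl(H(z)H(s_n)D(\lambda_n)R(\alpha_n)H(z)x\bigr)=Q(x)+\operatorname{Im}z\,|x_2|^2+\operatorname{Im}z\,\frac{1}{|\lambda_n|^2}\bigl|(R(\alpha_n)H(z)x)_2\bigr|^2 ,
\]
with an extra factor $1/|\lambda_n|^2$. This factor is precisely why $k$ is not an absolute constant: the paper sets $k=C^2$ with $C=\sup_n\|\Phi_n\|$, which bounds $|\lambda_n|$ from above (and from below) because $\Phi_*$ is a bounded sequence in $SL(2,\mathbb R)$. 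Your repeated assertion that ``$k$ is an absolute constant'' is therefore wrong, and your displayed reduction is missing the $|\lambda_n|^{-2}$ weight in front of the second square.

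The rest of your plan is salvageable once this is fixed: the case split on $|x_2|/|x_1|$ against a threshold of order $|\alpha_n|/(1+|z|)$ is essentially what the paper does (the paper instead tracks the ratio $|y_2|/|y_1|$ through each factor and uses the flip property of $R(\alpha)$, which is a cleaner bookkeeping device than bounding a sum of two squares, but the underlying dichotomy is the same). Using $Q(x)\le|x_1||x_2|$ and $|\sin\alpha_n|\ge\frac{2}{\pi}|\alpha_n|$ to close the ``small $|x_2|$'' case needs a bit more care than you indicate — a straightforward lower bound $|\sin\alpha_n||x_1+zx_2|\gtrsim|\alpha_n||x_1|$ does not by itself dominate $|\alpha_n||x_1||x_2|/(1+|z|)$ for all $|z|$, so the threshold and the constants must be chosen together — but this is a detail rather than a wrong idea. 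The substantive gap is the mishandling of $D(\lambda_n)$ and the consequent misreading of what $k$ depends on.
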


\begin{proof}[Proof of Lemma \ref{oml}.] We split the proof into
two cases.

\medskip\par\noindent\textbf{Case 1:} $\displaystyle
\frac{|x_2|}{|x_1|}\geq \frac{|\alpha_n|}{2(1+|z|)}$ . Then

\begin{multline*}
Q \left( H(z)H(s_n)D(\lambda_n)R(\alpha_n)H(z)x\right)
\\
\geq Q \left(H(s_n)D(\lambda_n)R(\alpha_n)H(z)x\right) = Q \left(
H(z)x\right)
\\
\geq Q(x)\left( 1 + {\rm Im} z \cdot \frac{|x_2|}{|x_1|} \right)
\geq Q(x)\left( 1 + \frac{|\alpha_n|{\rm Im} z}{2(1+ |z|)}\right).
\end{multline*}

\medskip\par\noindent\textbf{Case 2:} Now, we suppose that
$\displaystyle \frac{|x_2|}{|x_1|}\leq
\frac{|\alpha_n|}{2(1+|z|)}$, and split the proof of the estimate
\eqref{eq_21} into $4$ steps.

\medskip\par\noindent 1. Let us estimate how the matrix $H(z)$
changes the ratio of coordinates. Since  $|\alpha_n| \leq \frac {\pi}{2}$
we have
\begin{multline*}
\left|\left[H(z)x \right]_2\right|=|x_2| \leq
\frac{|\alpha_n|}{2}|x_1|\frac{1}{1+|z|} \\
= \frac{|\alpha_n|}{2}\cdot|x_1|\cdot\left( 1- \frac{|z|}{1+|z|}
\right) \leq
\frac{|\alpha_n|}{2}\cdot|x_1|\cdot\left(1-\frac{|\alpha_n||z|}
{2(1+|z|)} \right)
\end{multline*}
where $[\ ]_2$ means the second coordinate. Next,
\begin{equation*} \left|\left[H(z)x \right]_1
\right| = \left|x_1 + z x_2 \right| \ge |x_1| \left|
1-|z|\frac{|x_2|}{|x_1|}\right|
> |x_1|\cdot\left(1-\frac{|\alpha_n||z|}{2(1+|z|)}
\right)
\end{equation*}
since $\displaystyle |z|\frac{|x_2|}{|x_1|}\leq \frac
{|\alpha_n|}{2}\frac{|z|}{1+|z|} < \frac {|\alpha_n|}{2} <1$. Thus
\begin{equation*}
\left|\left[H(z)x \right]_2\right| \leq \frac{|\alpha_n|}{2}
\left|\left[H(z)x \right]_1\right|.
\end{equation*}

\medskip\par\noindent 2. It is easy to check the following property
of an orthogonal matrix $R(\alpha)$: for any $x\in \mathbb{C}^2$
and $|\alpha|\leq \frac{\pi}{2}$, the inequality
$|x_2|\leq\frac{|\alpha|}{2}|x_1|$ implies
$|[R(\alpha)x]_2|\geq\frac{|\alpha|}{2}|[R(\alpha)x]_1|$.
Therefore
\begin{equation*}
\left|\left[R(\alpha_n)H(z)x \right]_2\right| \geq
\frac{|\alpha_n|}{2} \left|\left[R(\alpha_n)H(z)x
\right]_1\right|.
\end{equation*}

Denote temporarily  $R(\alpha_n)H(z)x=y$, then $\displaystyle
|y_2| \geq \frac{|\alpha_n|}{2}\cdot|y_1|$.

\medskip\par\noindent 3. We set $k=C^2$ with $C=\sup_{i} ||\Phi_i||$
and obtain
\begin{equation*}
\frac{\left| \left[D(\lambda_n)y \right]_2\right|}{\left|
\left[D(\lambda_n)y \right]_1\right|}= \frac
{\left|\frac{1}{\lambda_n} \right||y_2|}{|\lambda_n||y_1|} \geq
\frac {1}{\lambda_n^2}\cdot \frac {|\alpha_n|}{2} >
\frac{|\alpha_n|}{2k}.
\end{equation*}

\medskip\par\noindent 4. At last
\begin{multline*}
Q \left( H(z)H(s_n)D(\lambda_n)R(\alpha_n)H(z)x\right) =
Q\left(H(z)H(s_n)D(\lambda_n)y\right)\\
= Q\left(H(s_n)D(\lambda_n)y\right)+ {\rm Im} z
\left|\left[H(s_n)D(\lambda_n)y \right]_2\right|^2=
Q\left(D(\lambda_n)y\right)\\
+ {\rm Im} z \left|\left[D(\lambda_n)y \right]_2\right|^2\geq Q
\left(D(\lambda_n)y\right)\left(1+ {\rm Im} z \cdot \frac{\left|
\left[D(\lambda_n)y \right]_2\right|}{\left|
\left[D(\lambda_n)y \right]_1\right|} \right) \\
\geq Q \left(D(\lambda_n)y\right)\left(1+ \frac {|\alpha_n|{\rm
Im} z}{2k}\right) \geq
Q(x)\left(1+ \frac {|\alpha_n|{\rm Im} z }{2k}\right) \\
> Q(x)\left( 1 + \frac{|\alpha_n|{\rm Im} z}{2k(1+
|z|)} \right).
\end{multline*}

\end{proof}

Returning to the proof of the theorem, we have
\begin{equation*}\aligned
P_n(z)= H(s_n) & D(\lambda_n)R(\alpha_n) H(z)\cdot\,...\,\cdot
H(s_1)D(\lambda_1)R(\alpha_1)H(z)= \\
& \prod_{1 \leq j \leq n}^{\curvearrowleft}
H(s_j)D(\lambda_j)R(\alpha_j)H(z)
\endaligned
\end{equation*}
(recall that $\displaystyle\prod_{1 \leq j \leq
n}^{\curvearrowleft}A_j$ stands for the matrix product
$A_nA_{n-1}\ \ldots \ $).

Denote
\begin{equation*}
B_n(z)= H(z/2)P_n(z)H(z/2)^{-1}=\prod_{1 \leq j \leq
n}^{\curvearrowleft} H(z/2)H(s_j)D(\lambda_j)R(\alpha_j)H(z/2).
\end{equation*}
Then
$$
\overline{\lim_{n\rightarrow\infty}}\ \ \frac{\log
\|P_n(z)\|}{n}=\overline{\lim_{n\rightarrow\infty}}\ \ \frac{\log
\|B_n(z)\|}{n}.
$$

Let us consider the vector $\displaystyle x = \left(\frac{i}{\sqrt
2},\frac{1}{\sqrt 2}\right)$, $|x|^2=2Q (x) = 1$. Then, due to the
fact that for arbitrary $y$
$$
\|y\|^2= y_1^2 + y_2^2 \geq 2|y_1y_2|=2|y_1 \overline {y_2}| \geq
2{\rm Im} (y_1\overline y_2)= 2Q(y),
$$
we obtain
\begin{equation*}\aligned
& \log \|B_n (z)\|\geq \frac{1}{2}\log \|B_n(z)x\|^2\geq
\frac{1}{2}
\log \left(2Q\left(B_n(z)x \right) \right)\\
& \geq\frac{1}{2} \sum_{j=1}^n\log \left(1 + \frac{|\alpha_j|{\rm
Im} \frac{z}{2}}{2k(1+ |\frac{z}{2}|)}\right)\geq
\frac{1}{4}\sum_{j=1}^n \frac{|\alpha_j|{\rm Im}
\frac{z}{2}}{2k(1+ |\frac{z}{2}|)} = \frac{{\rm Im}
\frac{z}{2}}{8k(1+ |\frac{z}{2}|)}\sum_{j=1}^n |\alpha_j|
\endaligned
\end{equation*}
where we used that $\displaystyle\log(1+x)\geq \frac{1}{2}x$ for $x \in
(0;1)$ and $\displaystyle \frac{|\alpha_j|{\rm Im} z}{2k(1+ |z|)}
\leq \frac{|\alpha_j|}{2k}< 1$.

As a consequence, we obtain:
\begin{equation*}
\overline{\lim_{n\rightarrow\infty}}\ \ \frac{\log
\|P_n(z)\|}{n}\geq \overline{\lim_{n\rightarrow\infty}}\ \
\frac{{\rm Im} \frac{z}{2}}{8k(1+ |\frac{z}{2}|)}\cdot
\frac{1}{n}\sum_{j=1}^n |\alpha_j|=\frac{q \cdot {\rm Im}
\frac{z}{2}}{8k(1+ |\frac{z}{2}|)}>0.
\end{equation*}

This proves (\ref{complex}), that is the exponential growth of
$\|P_n(z)\|$ for non-real $z$. Thus $E\subset\mathbb{R}$.

The claim that $\left|E\right|<\infty$ follows now from
Lemma~\ref{harmon} applied to the  subharmonic functions
$$
u_n(z)=\frac{\log \|P_n(z)\|}{n}.
$$
Indeed, the norm of the matrix $H(z)=\begin{pmatrix} 1 & z \\
0 & 1\end{pmatrix}$ does not exceed $1+|z|$. Hence
\begin{equation*}
\|P_n(z)\|= \Big\|\prod_{1 \leq j \leq n}^{\curvearrowleft}
\left[\Phi_jH(z)\right]\Big\|\leq \|H(z)\|^n\cdot\prod_{1 \leq j
\leq n }^{\curvearrowleft} \| \Phi_j\| \leq
\left(1+|z|\right)^n\cdot k^n
\end{equation*}
Therefore
\begin{equation*}
\frac{1}{n}\log\|P_n(z)\|\leq\log\left(1+|z|\right)+ \log k.
\end{equation*}
Thus the functions $u_n$ satisfy the majorization condition of
Lemma \ref{harmon}. By definition, $\limsup u_n(z) = 0$ for $z\in
E$. If $|E| = \infty$ then Lemma \ref{harmon} implies that
$\limsup u_n(z) \le 0$ for all $z\in \mathbb{C}$, however, this
contradicts (\ref{complex}).
\end{proof}

\section{Constructing an exceptional set containing a given sequence}

Let, as above, $\displaystyle H(t)=\begin{pmatrix} 1 & t \\
0 & 1 \end{pmatrix}$, $\Phi_*$ be a sequence of matrices from
$SL(2,\mathbb{R})$. The exceptional set was defined as follows
$$
\mathfrak B(\Phi_*)=\{t\geq 0: \sup_K \Big \|\prod_{1 \leq k \leq
K}^{\curvearrowleft}\Phi_kH(t)\Big\| <\infty \}
$$
We have proved that this set always has finite measure.
Nevertheless it can be unbounded. Moreover, it can contain an
arbitrary given sequence:
\begin{theorem}\label{seq}
For every sequence $\{t_n\}$ of positive numbers there exists a
sequence $\Phi_*$ such that $\{t_n\} \subset \mathfrak B(\Phi_*)$.
\end{theorem}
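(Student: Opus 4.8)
The plan is to build the sequence $\Phi_*$ explicitly, block by block, the $m$-th block being responsible for the $m$-th period.

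\emph{Reductions.} By Lemma~\ref{ogr} any $\Phi_*$ with $\mathfrak B(\Phi_*)\ne\emptyset$ is bounded, so we must produce a bounded sequence. If the set $\{t_n\}$ is bounded, say $t_n\le T$ for all $n$, then already the constant sequence $\Phi_*\equiv\Phi$ with $\Phi=\begin{pmatrix}1&0\\-1/T&1\end{pmatrix}$ works: $\operatorname{tr}(\Phi H(t_n))=2-t_n/T\in[1,2)$, so $\Phi H(t_n)$ is conjugate to a rotation and $\sup_K\|(\Phi H(t_n))^K\|<\infty$. Hence we may assume $\{t_n\}$ is unbounded; since only the set $\{t_n\}$ matters, we may enumerate the periods in any convenient order.

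\emph{The block scheme.} Write $\Phi_*=B_1B_2\cdots$, a concatenation of finite blocks, and let $P^{(m)}$ be the product accumulated after $B_m$; regard $B_m$ as a matrix polynomial $B_m(t)=\prod^{\curvearrowleft}\Phi_kH(t)$ with $k$ running over $B_m$. We want to choose the blocks, with all their matrices in a fixed ball of $SL(2,\mathbb R)$, so that inductively $P^{(m)}(t_n)=I$ for $1\le n\le m$ and, crucially, so that for each fixed $n$ there is a constant $C_n$ bounding the norm of \emph{every} partial product occurring inside \emph{every} block $B_m$ with $m\ge n$, evaluated at $t_n$. Given this, the theorem follows at once: for fixed $n$, $\|P_K(t_n)\|$ is bounded over the finitely many $K$ before $B_n$, is $\le C_n$ inside $B_n$ and every later block, and equals $I$ at the end of each $B_m$, $m\ge n$, so $\sup_K\|P_K(t_n)\|<\infty$. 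To build $B_m$, note that inductively $P^{(m-1)}(t_n)=I$ for $n<m$ and $P^{(m-1)}(t_m)=Q_m$ for some matrix $Q_m$, so we need $B_m(t_n)=I$ for $n<m$ and $B_m(t_m)=Q_m^{-1}$. The basic tool is the identity $\operatorname{tr}(\Phi H(\tau))=\operatorname{tr}\Phi+\Phi_{21}\tau$, affine in $\tau$: with $\Phi_{21}$ small and $\operatorname{tr}\Phi\approx 0$, $\Phi H(\tau)$ is elliptic for every $\tau$ in a long interval, so a run of equal matrices $\Phi$ produces, at each such period, a power of an elliptic element — a bounded orbit along an ellipse — and, choosing the run length by a simultaneous Diophantine approximation of the finitely many rotation numbers involved, such a run acts at each of a prescribed finite family of periods as a matrix as close to $I$ as we please. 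Combining a bounded number of such runs with a bounded number of auxiliary ``steering'' matrices — and using that finite products $\prod^{\curvearrowleft}\Phi_kH(t)$ with bounded $\Phi_k$ realise, at $t_1,\dots,t_m$, any prescribed $m$-tuple of values up to arbitrary accuracy (a density statement provable by a short Lie-theoretic computation, in which the distinctness of $t_1,\dots,t_m$ enters through Lagrange interpolation) — one arranges $B_m(t_n)\approx I$ for $n<m$ and $B_m(t_m)\approx Q_m^{-1}$, so that $P^{(m)}(t_n)\approx I$ for all $n\le m$. (Working with approximate equalities and summable errors in place of exact ones is harmless.)

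\emph{The main obstacle.} The real difficulty is the uniform-in-$m$ bound $C_n$ on the partial products at an already-treated period $t_n$. As the later periods $t_m$ grow, a matrix doing genuine ``work'' at $t_m$ cannot almost commute with $H(t_n)$, and a naive constant-matrix block produces excursions at $t_n$ of size comparable to the eccentricity of the ellipses it uses, which tends to infinity with $m$. The way out is to make the block $B_m$ itself recursively structured: inside $B_m$ one runs, in miniature, a shorter copy of the whole construction for the periods $t_1,\dots,t_{m-1}$, so as to hold their partial products inside their a priori bands, while only a thin outer layer of small rotations pushes the $t_m$-component toward $Q_m^{-1}$; the run lengths, rotation numbers, and the summable amounts $\varepsilon_m$ by which each $B_m$ misses being exactly $I$ on the old periods are then chosen so that the perturbations the later blocks inflict on the already-treated periods form a convergent product and the partial products never leave the prescribed bounded region. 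Carrying out these nested estimates, and checking that the resulting $\Phi_*$ is bounded, is the technical heart of the proof.
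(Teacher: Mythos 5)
You correctly name the crux --- controlling all partial products at the already-handled periods $t_1,\dots,t_{m-1}$ while the block $B_m$ does genuine work at a much larger $t_m$ --- but you do not solve it. ``Make the block $B_m$ itself recursively structured: inside $B_m$ one runs, in miniature, a shorter copy of the whole construction'' is a gesture at a construction, not a construction, and you yourself defer the ``nested estimates'' that would have to close the argument; as written this is a plan with a hole exactly where the difficulty lives. In addition, the density claim (that bounded words $\prod^{\curvearrowleft}\Phi_kH(t)$ approximate any prescribed tuple of values at $t_1,\dots,t_m$) is asserted, not proved, and even if granted it says nothing about the norms of the \emph{intermediate} partial products of the approximating word --- which is the same obstruction again.

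The paper sidesteps your obstacle altogether by an explicit $2$-adic construction in which the relevant products collapse \emph{exactly}, not approximately. For any $A\in SL(2,\mathbb R)$ one can pick a rotation $R$ with $\mathrm{tr}(RA)=0$, hence $(RA)^2=-\mathbf 1$. Set $A_1(t)=H(t)$; inductively choose $R_n$ with $(R_nA_n(t_n))^2=-\mathbf 1$ and put $A_{n+1}(t)=(R_nA_n(t))^2$; finally define $\Phi_n=R_{k+1}\cdots R_2R_1$, where $k$ is the $2$-adic valuation of $n$. The ``abacaba'' pattern makes the evolution telescope: $P_{2^m}(t)=R_{m+1}A_{m+1}(t)$, and more generally $P_n(t)$ factors as a head $B(n,k)$ of at most $N(k)$ bounded matrices times a tail built entirely from copies of $R_kA_k(t)$. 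At $t=t_k$ every $(R_kA_k(t_k))^2$ is $-\mathbf 1$, so the tail reduces to a bounded expression and $\|P_n(t_k)\|\le C(k,t_k)$ uniformly in $n$. No approximation, Diophantine, or limiting argument is needed, and the uniform-in-$n$ bound that is the sticking point in your sketch is built in by design. If you want to salvage a block scheme of your type, what is missing is precisely some analogue of this exact algebraic collapse, with proof.
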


\begin{proof} First let us note the following fact: for every
$SL(2,\mathbb{R})$-matrix $\displaystyle A = \begin{pmatrix} a & b \\
c & d \end{pmatrix}$ there exists an orthogonal matrix
$\displaystyle R=\begin{pmatrix}
\cos \alpha & -\sin \alpha \\ \sin \alpha & \cos \alpha \end{pmatrix}$
for which $(RA)^2=-\mathbf{1}$. To prove it is sufficient to choose
$R$ such that ${\rm tr}\ (RA)=0$, for example to take
$\displaystyle\alpha={\rm arctg} \frac{c-b}{a+d}$.

Now let us construct two sequences $\{A_n(t)\}$ and $\{R_n\}$ in
the following way. For $A_1(t)=H(t)$ we will choose $R_1$ such
that $(R_1A_1(t_1))^2=-\mathbf{1}$. Further for each $n \in
\mathbb{N}$ we define $A_n(t)=(R_{n-1}A_{n-1}(t))^2$ and we choose
$R_n$ such that $(R_nA_n(t_n))^2=-\mathbf{1}$.

Now we define sequence $\Phi_*$ as follows: $\Phi_n = R_{k+1}\
\ldots \ R_2R_1$ where $k$ is the the largest $j$ such that $2^j$
divides $n$. Thus we have: $\Phi_1 = R_1$, $\Phi_2 = R_2R_1$,
$\Phi_3 = R_1$, $\Phi_4 = R_3R_2R_1$, $\Phi_5 = R_1$, $\Phi_6 =
R_2R_1$, $\Phi_7 = R_1$, $\Phi_8 =R_4 R_3R_2R_1$, $\Phi_9 = R_1$,
...

Then the evolution sequence  $ P_n(t) = \Phi_n H(t)\Phi_{n-1} H(t)
\, ... \, \Phi_1 H(t)$  has the form
\begin{equation*}
\ldots \
R_3R_2R_1H(t)R_1H(t)R_2R_1H(t)R_1H(t)R_3R_2R_1H(t)R_1H(t)R_2R_1H(t)R_1H(t)
\end{equation*}
\begin{equation*}
= \ldots \ R_2 A_2(t)R_4R_3R_2A_2(t)R_2A_2(t)R_3R_2A_2(t)R_2A_2(t)
\end{equation*}
\begin{equation*}
= \ldots \ R_4R_3A_3(t)R_3A_3(t)R_4R_3A_3(t)R_3A_3(t)
\end{equation*}
and so on. Thus, for any $k$,
\begin{equation*}
P_n(t) = B(n,k)\ldots \
R_{k+2}R_{k+1}R_kA_k(t)R_kA_k(t)R_{k+1}R_kA_k(t)R_kA_k(t)
\end{equation*}
where the factor $B(n,k)$ is a product of not more than $N = N(k)$
matrices which are either orthogonal or equal to $H(t)$. It
follows that the norm of $B(n,k)$  does not exceed a constant
depending only on $k$ and $t$:
$$
\|B(n,k)\|\le C(k,t).
$$
Since
$(R_kA_k(t))^2 = -\mathbf{1}$ for $t=t_k$,  one has
$$
\|P_n(t_k)\|\le C(k,t_k).
$$
This means that $t_k\in \mathfrak B(\Phi_*)$.
\end{proof}

\section{Constructing an essentially unbounded exceptional set}

In this section we will construct an example of a sequence
$\Phi_*\subset SL(2,\mathbb{R})$ for which the exceptional set
$$
\mathfrak B(\Phi_*)=\{t\geq 0: \sup_K \Big \|\prod_{1 \leq k \leq
K}^{\curvearrowleft}\Phi_kH(t)\Big\| <\infty \}
$$
is essentially unbounded, that is $\displaystyle|\mathfrak
B(\Phi_*)\bigcap [a,+\infty)| > 0$ for all $a>0$ .

Let us consider a sequence of matrices $\displaystyle
M_j=M(c_j)=\begin{pmatrix}
1 & 0 \\ c_j & 1 \end{pmatrix}$ $(j\geq 0)$ with $c_j \neq 0$.
We define the sequence $\{\Phi_k\}$ $(k\geq1)$ in the following
way: $\displaystyle\Phi_k=M(c_{j(k)})$ where $j(k)$ is the largest
$j$ such that $2^j$ divides $k$. The first few terms of the
sequence $\Phi_*$ are
$$M_0, M_1, M_0, M_2, M_0, M_1, M_0, M_3, M_0, M_1, M_0,
M_2, M_0, M_1, M_0,...
$$
("the abacaba order" \cite{abc}).

\begin{theorem}\label{EUS}
There exists a sequence $\{c_j\}$ such that the set $\mathfrak
B(\Phi_*)$ is essentially unbounded.
\end{theorem}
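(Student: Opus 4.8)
The plan is to build the numbers $c_j$ one at a time, exploiting the self-similarity of the abacaba word. Write $m_j(t)=M(c_j)H(t)$ and $C_k(t)=\prod_{1\le i\le 2^{k}-1}^{\curvearrowleft}\Phi_iH(t)=P_{2^k-1}(t)$. The abacaba order yields the renormalization identity $C_{k+1}(t)=C_k(t)\,m_k(t)\,C_k(t)$ with $C_1(t)=m_0(t)$, and a short induction on the binary expansion of $n$ shows that, after cancelling adjacent inverse pieces, every partial product equals $C_{a_1}(t)^{\varepsilon_1}C_{a_2}(t)^{\varepsilon_2}\cdots C_{a_r}(t)^{\varepsilon_r}$ with $1\le a_1<a_2<\cdots<a_r\le\log_2 n+O(1)$, alternating signs, and $\varepsilon_r=+1$; in particular no block $C_a$ is repeated. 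So everything reduces to controlling the blocks $C_a(t)$ for $t$ in a suitable set.

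Two structural remarks drive the construction. First, if $\sigma_t$ is the reflection of the hyperbolic plane in the vertical geodesic $\{{\rm Re}\,z=-t/2\}$, then $\sigma_t\,m_j(t)\,\sigma_t=m_j(t)^{-1}$ for \emph{every} value of $c_j$, whence $\sigma_t\,C_k(t)\,\sigma_t=C_k(t)^{-1}$ for all $k$; so each $C_k(t)$, when elliptic, is a rotation about a point of that geodesic. Second, put $b_k(t)=C_k(t)H(t)$; then $b_1(t)=M(c_0)H(2t)$ and $b_{k+1}(t)=b_k(t)\,(I+c_kN(t))\,b_k(t)$ with $N(t)=H(t)^{-1}\binom{\;0\;0}{\;1\;0}H(t)$ a fixed nilpotent matrix, so that at $c_k=0$ one has $b_{k+1}=b_k^{2}$, and $C_a(t)=b_a(t)H(t)^{-1}=b_{k+1}(t)^{\,2^{a-k-1}}H(t)^{-1}$ whenever $c_{k+1}=\cdots=c_{a-1}=0$. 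Thus on any set where $b_{k+1}(t)$ is elliptic the alternating-sign product over the ``tail'' indices $a_i>k+1$ telescopes, the $H^{\pm1}$ factors cancelling pairwise, to $H(t)\cdot b_{k+1}(t)^{\,(\text{integer})}\cdot H(t)^{-1}$, bounded by $\|H(t)\|^2$ times the conjugating constant of the elliptic $b_{k+1}(t)$, \emph{uniformly in $n$}; and the ``head'' indices $a_i\le k+1$ contribute only a fixed number of factors, hence a finite amount for each fixed $t$.

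So I would run the following recursion. Having fixed $c_0,\dots,c_{k-1}$ (hence $C_k$ and $b_k$), pick a large $\tau_k$, with $\tau_k\to\infty$, and choose $c_k$ so that $b_{k+1}(t)=b_k(t)(I+c_kN(t))b_k(t)$ is elliptic with a finite conjugating constant $\Lambda_k$, uniformly on a positive-measure set $I_k$ in a small neighbourhood of $\tau_k$. The needed $c_k$ is \emph{small}: from ${\rm tr}\,b_{k+1}(t)=({\rm tr}\,b_k(t))^2-2+c_k\,({\rm tr}\,b_k(t))\,{\rm tr}(N(t)b_k(t))$, and the fact that for large $t$ the factor ${\rm tr}(N(t)b_k(t))$ beats ${\rm tr}\,b_k(t)$ by a power of $t$, a coefficient $c_k=O(\tau_k^{-1})$ already pulls ${\rm tr}\,b_{k+1}$ from $+\infty$ back into $(-2,2)$ near $t=\tau_k$ and opens an elliptic window there. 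Choosing $\tau_k\to\infty$ fast enough makes $\sum_k|c_k|$ — and all the other series that will appear — as small as desired. For $t\in I_k$ one then shows $\sup_n\|P_n(t)\|<\infty$: the head is harmless, and for the tail the displayed telescoping fails only because $c_j\ne0$ for $j>k$; but by the first remark each $b_a(t)$ ($a>k+1$) is still elliptic (squaring keeps $(-2,2)$ inside $(-2,2)$ with room away from the rotation parameters $0$ and $\pi$), hence a rotation about a point $z_a(t)$ on the geodesic, and passing from $b_a$ to $b_{a+1}=b_a(I+c_aN)b_a$ moves this centre by only $d(z_{a+1},z_a)\lesssim|c_a|$ with a constant depending on $\tau_k$; so $d(z_a,z_{k+1})\lesssim\sum_{j\ge k+1}|c_j|$ stays small for all $a$, all tail factors are rotations about points in a tiny ball, and a product of $O(\log n)$ of them is within a uniformly bounded error (the error involving $\sum_{j\ge k+1}|c_j|$, not the number of factors) of a genuine rotation about the common centre. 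Hence $\mathfrak B(\Phi_*)\supseteq\bigcup_k I_k$, and since $|I_k|>0$ and $\tau_k\to\infty$, $|\mathfrak B(\Phi_*)\cap[a,+\infty)|>0$ for every $a>0$.

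I expect the hard part to be exactly the last step: making precise and quantitative that the perturbed iterates $b_a(t)$ remain elliptic with a summably small total drift of centres, and that telescoped products of near-equal rotations are bounded with an error independent of the number of factors — together with the bookkeeping that, at stage $k$, fixes $\tau_k$ and $I_k$ and imposes on all future coefficients a decay rate compatible with the $\tau_k$ already committed to, so that none of the countably many smallness requirements clash. The preliminary combinatorial lemma on the telescoped form of $P_n(t)$ (no repeated block, head of bounded length) and the reflection identity for $C_k(t)$ should both be routine.
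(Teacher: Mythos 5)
Your overall strategy coincides with the paper's: exploit the self-similar structure of the abacaba sequence to write every partial product in terms of the dyadic blocks, prove that these blocks stay elliptic with conjugators forming a Cauchy family, and build the exceptional set inductively by planting new elliptic windows $I_k$ near larger and larger $\tau_k$ while keeping $\sum|c_j|$ small. Your telescoped form $P_n=C_{a_1}^{\pm}\cdots C_{a_r}^{\pm}$ via balanced binary is equivalent to the paper's formula $P_K=\prod_\ell M_{j_\ell}A_{j_\ell-1}$, and your "rotations about nearby centres on a geodesic" is the geometric reformulation of the paper's $A_n=S_nD_nS_n^{-1}$ with $\|S_{n+1}-S_n\|<\varepsilon_n$. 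Both are fine and the reflection identity is correct.

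The genuine gap is the step you yourself flag as "the hard part," and it is not merely a matter of being quantitative: the assertion that each $b_a(t)$, $a>k+1$, remains elliptic on all of $I_k$ because "squaring keeps $(-2,2)$ inside $(-2,2)$ with room away from the rotation parameters $0$ and $\pi$" is false. If $\theta$ is the rotation angle of $b_a(t)$, then $\operatorname{tr}b_a(t)^2=2\cos 2\theta$; near $\theta=\pi/2$ this trace is near $-2$, so there is no uniform margin, and the perturbation $c_aN(t)$ can push $b_{a+1}(t)$ out of the elliptic region. Since $\theta$ evolves essentially by angle doubling and varies nonconstantly (polynomially) in $t$, for any fixed positive-measure $I_k$ there will be $t\in I_k$ and stages $a$ at which $\theta_a(t)$ passes through $\pi/2$; hence one cannot keep all of $I_k$. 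What the paper does, and what is missing from your recursion, is that at \emph{every} subsequent step $n$ one must delete from the current set a small neighbourhood of the finitely many $t$ where $\operatorname{tr}A_{n-1}(t)=0$ (the $\pi/2$ angles); this is Lemma~\ref{mainl}, and the whole bookkeeping with $|E_n\setminus\widetilde E_n|<\varepsilon_n$, $\sum\varepsilon_n<\infty$, and $E=\bigcup_k\bigcap_{n\ge k}E_n$ exists precisely to make the shaved measure summable so that a positive-measure piece of each $I_k$ survives. Your $I_k$ is chosen once and for all at stage $k$ and then claimed to work for all later stages, which the angle-doubling dynamics does not permit. A second, smaller point the paper needs and you implicitly rely on without justification: the drift bound $d(z_{a+1},z_a)\lesssim|c_a|$ and the uniform conjugating constant both require the angle $\theta_a$ to stay bounded away from $0$ and $\pi$ as well, which is again exactly what the inductive shaving furnishes.
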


The proof of this statement will be given in the next section.
Here we only outline its basic ideas.

First of all note that our choice of the sequence $\Phi_k$ implies
that the partial products $\displaystyle \prod_{1 \leq k \leq
K}^{\curvearrowleft}\Phi_kH(t)$ with diadic numbers ($K = 2^m$)
are related by a simple recurrent formula. Namely let us define a
sequence of matrix-functions $A_n(t)$, $n\ge -1$, as follows:
\begin{equation*}
A_{-1}(t) = H(t), \;\;A_{n+1}(t)=A_n(t)M(c_{n+1})A_n(t).
\end{equation*}
Then it is easy to check that $\displaystyle \prod_{1 \leq k \leq
2^m}^{\curvearrowleft}\Phi_kH(t) = M(c_m)A_{m-1}(t)$. More
generally, $\displaystyle \prod_{2^l+1 \leq k \leq
2^{l+m}}^{\curvearrowleft}\Phi_kH(t) = M(c_m)A_{l-1}(t)$. So it is
possible to express all partial products via $A_n(t)$. For
example, for $K=84$, we have $\displaystyle84=2^6+2^4+2^2$ and,
respectively,

\[\prod_{1 \leq k \leq
84}^{\curvearrowleft}\Phi_kH(t)=M(c_2)A_1(t)M(c_4)A_3(t)M(c_6)A_5(t).
\]

The general formula is
\begin{equation}\label{gendec}
\prod_{1 \leq k \leq K}^{\curvearrowleft} \Phi_kH(t) = \prod_{1
\leq \ell \leq L}^{\curvearrowleft} M_{j_\ell}A_{j_\ell-1}(t)
\end{equation}
(here $\{j_\ell\}$ is the strictly increasing finite sequence of
integers such that $\displaystyle
K=2^{j_1}+2^{j_2}+\ldots+2^{j_L}$). It follows from (\ref{gendec})
that for proving the boundedness of the sequence of all partial
products for a given $t$, it will be sufficient to find an upper
bound for the norms of partial products
$\displaystyle \prod_{1 \leq k \leq
K}^{\curvearrowleft}\Phi_kH(t)$ for all $K$ that are multiples of
$2^m$ for some integer
$m$. Note that only $A_n$ with $n \geq m$ can appear in such partial
products.

Suppose that for some $t>0$ and for some integer $m$ the following
condition holds:
\begin{equation}\label{trace}
-2 < {\rm tr}(A_n(t)) < 2, {\rm\;\; for\;\; all\;\; }n \ge m.
\end{equation}
Then the eigenvalues of $A_n(t)$ are complex conjugate and the
matrices are similar to diagonal ones:

$\displaystyle
A_n(t)=S_n(t)D_n(t)S_n(t)^{-1}$ where $ \displaystyle D_n(t)=
\begin{pmatrix} \lambda_n(t) & 0 \\
0  & \overline{\lambda_n(t)}\end{pmatrix}$.

Suppose also that, for all $n \ge m$,

\begin{equation}\label{close}
\|S_{n+1}(t)-S_n(t)\|<
\varepsilon_n
\end{equation}
where the numbers $\varepsilon_n$ are such that
$\sum_{n=m}^{\infty} \varepsilon_n < \infty$.

Under these conditions it can be proved, using (\ref{gendec}),
that all products
$\displaystyle\prod_{1 \leq k \leq
K}^{\curvearrowleft}\Phi_kH(t)$ with $\displaystyle K \in
2^{m+1}\mathbb{Z}$ are bounded by the same constant.

Thus, it remains to construct an essentially unbounded set $E$
such that the conditions (\ref{trace}) and (\ref{close}) are
satisfied for all $t\in E$ (with $m$ depending on $t$).  We will
define $E$ as $\displaystyle \bigcup_{k=0}^{\infty}\bigcap_{n=k}^
{\infty}E_n$ where sets $E_n$ are constructed inductively.

The possibility of the induction steps is provided by two
auxiliary results (Lemmas 7 and 8) which state that (under some
conditions) a set $F\subset \mathbb{R}$ on which the condition
(\ref{trace}) holds, can be slightly reduced and, respectively,
can be extended by adding an interval located arbitrarily far away
from the origin in such a way that on the new set the inequalities
of type (\ref{close}) hold.

For the beginning of the induction process we take a closed
interval $E_0 \subset (0, +\infty)$ such that ${\rm tr}\
A_0(t)\in(-2,2)$ on $E_0$, and choose a sequence $\displaystyle
\{\varepsilon_n\}$ with $\sum_{n=1}^{\infty}\varepsilon_n<
\frac{1}{3}|E_0|$.

Then we choose (using Lemma 7) a closed subset
$\displaystyle\widetilde E_0 \subset E_0$ such
that $\displaystyle|E_0 \setminus \widetilde E_0|<\varepsilon_1 |E_0|$
and the conditions $\displaystyle{\rm tr}\  A_1(t)\in (-2,2)$ and
$\displaystyle\|S_1(t)-S_0(t)\|<\varepsilon_1$ hold on
$\displaystyle\widetilde E_0$.

Now using Lemma 8 we find a closed interval $I_1$ such that its
left endpoint is larger than $\sup E_0$ and ${\rm tr}\ A_1(t) \in
(-2,2)$ on $I_1$. We put $\displaystyle E_1=\widetilde E_0 \cup
I_1$.

In this manner we proceed with constructing the intervals $I_n$
and sets $E_n$. Namely, on the $n$-th step we get
a set $E_n$ such that $I_n \subset E_n$, and choose its subset
$\displaystyle\widetilde E_n$, satisfying $\displaystyle|E_n \setminus
\widetilde E_n|<\varepsilon_n|I_n|$, in such a way that
the conditions $\displaystyle{\rm tr}\  A_j(t)\in (-2,2)$ and
$\displaystyle\|S_j(t)-S_{j-1}(t)\|<\varepsilon_n$ are fulfilled
for $t \in \widetilde E_n $ and for all $j \leq n$. Then we set
$E_{n+1} = E_n\cup I_{n+1}$ where the left endpoint of $I_{n+1}$
is larger than $\sup E_n$.

The smallness of the deleted parts of the sets $E_n$ and the
condition $I_n \subset (n,\infty)$ provide the essential
unboundedness of the set $E$.

\section{Proof of Theorem \ref{EUS}}

We start with two auxiliary results.

Let us call a real polynomial matrix function $A(t)=
\begin{pmatrix} a_{11}(t) & a_{12}(t)\\ a_{21}(t) &
a_{22}(t)\end{pmatrix}$ and a compact set $E\subset (0,+\infty)$ a
good pair if the following conditions hold:
\begin{itemize}
\item[(i)] $\det A(t)=1$ for all $t$;
\item[(ii)] ${\rm tr}\  A(t)=a_{11}(t)+a_{22}(t)$ is a non-constant
polynomial;
\item[(iii)] ${\rm tr}\  A(t)\in (-2,2)$ for all $t\in E$.
\end{itemize}

If $(A(t), E)$ is a good pair, then, according to the spectral
theorem, one can find continuous functions
$\lambda:E\to\mathbb{T}$, ${\rm Im}\ \lambda \ne 0$ and $S:E\to
SL(2,\mathbb{C})$ such that $\displaystyle A(t)=S(t)D(t)S(t)^{-1}$
where $ D(t)= \begin{pmatrix} \lambda(t) & 0 \\ 0  &
\overline{\lambda(t)}
\end{pmatrix}$.

Choosing $c\in \mathbb{R}$, we set  $\widetilde
A(t):=A(t)M(c)A(t)$.

\begin{lemma}\label{mainl}
Assume that $(A(t),E)$ is a good pair. Let $\varepsilon>0$. Then
there exists $\delta>0$ such that, for every real $c$ with
$|c|<\delta$, there exists a compact set $\widetilde E \subset E$
such that

{\rm (a)} $\;\;|E\setminus\widetilde E|<\varepsilon$;

{\rm(b)} $\;\;$ the pair $(\widetilde A(t), \widetilde E)$ is
good;

\noindent and

{\rm (c)} $\;\;$ A matrix-function $\widetilde{S}(t)$,
diagonalizing $\widetilde A(t)$: $$\widetilde{A}(t) =
\widetilde{S}(t)\widetilde{D}(t)\widetilde{S}(t)^{-1}$$ can be
chosen in such a way that  $\|\widetilde S(t)-S(t)\|<\varepsilon$.
\end{lemma}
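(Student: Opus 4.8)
The plan is to work near the finitely many points of $E$ where the matrix $\widetilde{A}(t)$ fails to be hyperbolic-ish, i.e. where its trace equals $\pm 2$, and to handle the rest by a uniform continuity/perturbation argument. First I would compute the trace of $\widetilde{A}(t) = A(t)M(c)A(t)$ as a function of $c$: using $\mathrm{tr}(A(t)M(c)A(t)) = \mathrm{tr}(A(t)^2 M(c))$ and expanding $M(c) = \mathbf{1} + c N$ with $N = \begin{pmatrix} 0 & 0 \\ 1 & 0\end{pmatrix}$, one gets $\mathrm{tr}(\widetilde{A}(t)) = \mathrm{tr}(A(t)^2) + c\,\mathrm{tr}(A(t)^2 N)$. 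Since $\mathrm{tr}(A(t)^2) = (\mathrm{tr}\,A(t))^2 - 2$ lies strictly in $[-2,2)$ on the compact set $E$ (it equals $2$ only where $\mathrm{tr}\,A = \pm 2$, which is impossible on $E$ by (iii)), for $c=0$ the pair $(\widetilde A, E)$ is already good except possibly where $\mathrm{tr}(A(t)^2) = -2$, i.e. where $\mathrm{tr}\,A(t) = 0$. At those finitely many points (finitely many because $\mathrm{tr}\,A$ is a non-constant polynomial by (ii), hence so is $\mathrm{tr}(A(t)^2 N)$ unless it vanishes identically — a degenerate case I would need to address separately) the value $\mathrm{tr}(\widetilde A) = -2$ sits on the boundary, so I excise small neighborhoods.

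The key steps, in order: (1) establish the trace formula above and note that $\mathrm{tr}(\widetilde A(t)) \to \mathrm{tr}(A(t)^2) \in [-2, 2)$ uniformly on $E$ as $c \to 0$; (2) let $Z = \{t \in E : \mathrm{tr}\,A(t) = 0\}$, a finite set, and for small $\eta > 0$ set $\widetilde E = E \setminus \bigcup_{z \in Z}(z-\eta, z+\eta)$, choosing $\eta$ small enough that $|E \setminus \widetilde E| < \varepsilon/2$; (3) on the compact set $\widetilde E$, $\mathrm{tr}(A(t)^2)$ is bounded away from $\pm 2$, so by uniform continuity in $c$ there is $\delta > 0$ with $\mathrm{tr}(\widetilde A(t)) \in (-2,2)$ for all $|c| < \delta$ and all $t \in \widetilde E$ — giving (iii) for the new pair; (4) verify (i) is automatic since $\det \widetilde A = (\det A)^2 (\det M) = 1$, and verify (ii), that $\mathrm{tr}\,\widetilde A$ is non-constant, possibly after shrinking $\delta$ (for $c\ne 0$ small, $\mathrm{tr}\,\widetilde A = \mathrm{tr}(A^2) + c\,\mathrm{tr}(A^2 N)$ is non-constant as long as $\mathrm{tr}(A^2)$ is non-constant, which holds because $\mathrm{tr}\,A$ is non-constant; if $\mathrm{tr}(A^2 N)$ happens to be a multiple of $\mathrm{tr}(A^2)$ one still wins for generic $c$).

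For part (c), the eigenvalue $\lambda(t)$ of $A(t)$ depends continuously (in fact real-analytically) on $\mathrm{tr}\,A(t)$ on $E$, and $A(t)^2 = S(t) D(t)^2 S(t)^{-1}$, so at $c = 0$ one can take $\widetilde S = S$ and $\widetilde D = D^2$ — but $\widetilde A = A^2$ only when $c=0$. For $c \ne 0$, I would use the standard fact that the diagonalizing matrix of a $2\times 2$ matrix with simple eigenvalues can be chosen to depend continuously — and Lipschitz-continuously on compact sets — on the entries, so that $\|\widetilde S(t) - S(t)\|$ is controlled by $\|\widetilde A(t) - A(t)^2\| + \|\text{(eigenvalue discrepancy)}\|$, both of which are $O(c)$ uniformly on $\widetilde E$ once the spectral gap is bounded below there (which step (3) guarantees). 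Shrinking $\delta$ once more makes this $< \varepsilon$. The main obstacle is precisely this last estimate: one must pin down the eigenvectors' modulus of continuity uniformly on $\widetilde E$, which requires the spectral gap $|\lambda - \overline\lambda| = 2|\mathrm{Im}\,\lambda|$ to be bounded away from zero on $\widetilde E$ — this is exactly why the excision in step (2) around the zeros of $\mathrm{tr}\,A$ (where, after squaring, $\mathrm{Im}\,\lambda_{\widetilde A} \to 0$) is essential and must be carried out before, not after, fixing $\delta$.
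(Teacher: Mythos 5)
Your proposal is correct and follows essentially the same approach as the paper: both excise small neighborhoods of the finitely many zeros of $\mathrm{tr}\,A$ in $E$ (exactly where the spectral gap of $A(t)^2$ collapses, since $\lambda^2=\overline{\lambda^2}=-1$ there), then choose $\delta$ small enough that the trace of $\widetilde A$ stays in $(-2,2)$ on $\widetilde E$, the diagonalizer stays within $\varepsilon$ of $S$, and the non-constancy of $\mathrm{tr}\,\widetilde A=[\mathrm{tr}\,A][\mathrm{tr}\,A+ca_{12}]-2$ is preserved. Where you invoke the general Lipschitz dependence of the spectral decomposition on the matrix entries once the gap is bounded below, the paper makes this concrete by first conjugating $\widetilde A$ by $S(t)$ to a near-diagonal matrix $B(t)$, writing the eigenvector matrix $V(t)$ of $B(t)$ in closed form, normalizing it to $\widetilde V(t)=V(t)/\sqrt{\det V(t)}$ (which is then uniformly close to $I$), and setting $\widetilde S=S\widetilde V$ — a constructive instantiation of the perturbation fact you cite, which also settles the normalization ambiguity you gloss over.
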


\begin{proof}[Proof.]
Note that if $|c|$ is small then the function $\widetilde A(t)$ is
close to $A(t)^2$ on $E$:
$$
\|\widetilde A(t)-A(t)^2\|=\|A(t)[M(c)-I]A(t)\|\le
|c|\cdot\|A(t)\|^2\le C_1 \delta)
$$
It follows that it is ``almost diagonalized'' by means of $S(t)$
$$
\|S(t)^{-1}\widetilde A(t) S(t)-D(t)^2\|\le C_1\delta\cdot
\|S(t)\|^2 \le C_2 \delta
$$
on $E$. Since the matrix function $B(t) = S(t)^{-1}\widetilde A(t)
S(t)$ is similar to $\widetilde A(t)$ (so has the same diagonal
part $\widetilde D(t)$), the pair $(\widetilde A(t), \widetilde
E)$ (whatever $\widetilde{E}$ be chosen) is good if and only if
$(B(t), \widetilde E)$ is good. So we will deal with $B(t)$. Let
us first of all show that $B(t)$ can be diagonalized
$$
B(t) = V(t)\widetilde D(t) V(t)^{-1}
$$
via a matrix $V(t)$ which is close to $I$. It will follow that
\linebreak $\displaystyle\widetilde A(t) = S(t)V(t)\widetilde D(t)
V(t)^{-1}S(t)^{-1}$, and $\widetilde S(t) = S(t)V(t)$ is close to
$S(t)$.

We already have that
$$
\|B(t) - D(t)^2\|\le C_2\delta
$$
on $E$.
Now, $\displaystyle D(t)^2= \begin{pmatrix} \lambda^2(t) & 0 \\
0  & \overline{\lambda^2(t)}
\end{pmatrix}$
is a continuous diagonal matrix-function with distinct diagonal
elements for all $t\in E$ except, maybe, finitely many $t$
satisfying the equation ${\rm tr}\ A(t)=0$ (in which case
$\lambda(t)=\pm i$ and $\lambda^2(t)=\overline{\lambda^2(t)}=-1$).
Let $G$ be any open set containing those exceptional $t$ and such
that $|G|<\varepsilon$. Put $\widetilde E=E\setminus G$. Then
${\rm Tr}\ [D(t)^2]=2{\rm Re}\ [\lambda^2(t)]\subset [-a,a]$ for
all $t\in \widetilde E$ with some $a<2$. Let $\delta>0$ be so
small that $2C_2\delta <2-a$. Then ${\rm tr}\ B(t) \in(-2,2)$ and,
therefore, the eigenvalues of $B(t)$ are $\widetilde\lambda(t)$
and $\overline{\widetilde\lambda(t)}$ with
$|\widetilde\lambda(t)|=1$. Moreover, $\widetilde\lambda(t)$ is a
continuous function of $t$ and $\displaystyle
|\widetilde\lambda(t)-\lambda^2(t)|\le C_3\sqrt\delta$. Let now
$\displaystyle m=\min_{t\in \widetilde E}|{\rm Im}\
[\lambda^2(t)]|$ and note that $m>0$. Also, let
$$
B(t)-D(t)^2=:\Delta(t)=
\begin{pmatrix} \Delta_{11}(t) & \Delta_{12}(t)\\
\Delta_{21}(t) & \Delta_{22}(t)
\end{pmatrix}.
$$
Then the matrix $V(t)$ whose columns are eigenvectors of $B(t)$ is
$$
V(t)=\begin{pmatrix}
\overline{\lambda^2(t)}-\widetilde\lambda(t)+\Delta_{22}(t) &
\Delta_{12}(t)
\\
-\Delta_{21}(t) &
\overline{\widetilde\lambda(t)}-\lambda^2(t)-\Delta_{11}(t)
\end{pmatrix}\,.$$
The exact formula for $V(t)$ doesn't matter but it is important
that $V(t)$ can be chosen to be a continuous function of $t$ that
is close to a diagonal matrix with equal non-zero elements on the
diagonal when the perturbation $\Delta(t)$ is close to $0$. Note
that $\det V(t)\ne 0$ if $\delta$ is small enough. Let now
$\displaystyle \widetilde V(t)=\frac 1{\sqrt{\det V(t)}}V(t)$
where the branch of the square root of the determinant is chosen
in such a way that it equals
$\overline{\lambda^2(t)}-\lambda^2(t)$ when $\Delta(t)=0$. Then
the norm $\|\widetilde V(t)-I\|$ can be made arbitrarily small if
$\delta$ is small enough. It remains to note that $B(t)=
\widetilde V(t)\widetilde D(t)\widetilde V(t)^{-1}$ where
$\displaystyle \widetilde D(t)= \begin{pmatrix}
\widetilde\lambda(t) & 0 \\
0 & \overline{\widetilde\lambda(t)}
\end{pmatrix}$,
so we can put $\widetilde S(t)=S(t)\widetilde V(t)$.

We proved the  statement (c) of the lemma. The statement (a)
follows from the inequality $|G| < \varepsilon$. To have (b) we
must check conditions $(i-iii)$ for the matrix $\widetilde A(t)$.
The condition $(i)$ is obvious because the product of three
matrices of determinant $1$ is still a matrix of determinant $1$.
The condition $(iii)$ is proved above: we have shown that ${\rm
tr}\ B(t)\subset (-2,2)$ but ${\rm tr}\ \widetilde A(t) = {\rm
tr}\ B(t)$. It remains to check $(ii)$. A direct computation
yields
\begin{multline*}
{\rm tr}\ \widetilde
A(t)=a_{11}^2(t)+2a_{12}(t)a_{21}(t)+a_{22}^2(t)+
ca_{12}(t)[a_{11}(t)+a_{22}(t)]
\\
=[{\rm tr}\ A(t)]\cdot[{\rm tr }\ A(t)+ca_{12}(t)]-2\,.
\end{multline*}
Since ${\rm tr}\ A(t)$ is a non-constant polynomial, the whole
expression is a non-constant polynomial for all sufficiently small
$c$ and we are done.
\end{proof}

Note that by the construction, the matrix $V(t)$ is unimodular.
Hence $\widetilde{S}(t)$ is unimodular if $S(t)$ is such. This
shows that in further constructions, based on Lemma \ref{mainl},
we may assume that the obtained matrix functions $S_n(t)$ are
unimodular.

In the following lemma, which can be regarded as a modification of
Lemma \ref{mainl}, we preserve the notations $\widetilde{E}(t)$
and $\widetilde{S}(t)$. For brevity, let us call a polynomial
matrix function $\displaystyle P(t)= (p_{ij}(t))$ {\it upper right
dominating} if the degree of the polynomial $p_{12}(t)$ is more
than the degrees of others $p_{ij}(t)$.

\begin{lemma}\label{modif}
Assume that $(A(t),E)$ is a good pair and that the polynomial
matrix $A(t)^2$ is upper right dominating. Let $\varepsilon>0$ and
$N> 0$ be given.  Then there exists $\delta > 0$ and a compact interval
$I\subset (N,\infty)$ such that, for every real $c$ with $0<|c|\le \delta$,
there exists a compact set $\widetilde E\subset E$ such that
$|E\setminus\widetilde E|<\varepsilon$; $(\widetilde A(t),
\widetilde E\cup I)$ is a good pair; and, moreover, $\|\widetilde
S(t)-S(t)\|<\varepsilon$ on $\widetilde E$.
\end{lemma}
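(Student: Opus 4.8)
The plan is to bootstrap from Lemma~\ref{mainl}. That lemma already delivers, for $|c|<\delta$, a compact set $\widetilde E\subset E$ with $|E\setminus\widetilde E|<\varepsilon$, with $(\widetilde A(t),\widetilde E)$ a good pair, and with a diagonalizing matrix $\widetilde S(t)$ satisfying $\|\widetilde S(t)-S(t)\|<\varepsilon$ on $\widetilde E$. What remains is to produce a single extra compact interval $I\subset(N,\infty)$ on which ${\rm tr}\,\widetilde A(t)\in(-2,2)$ as well, so that $(\widetilde A(t),\widetilde E\cup I)$ is still a good pair. The key new hypothesis we have and must exploit is that $A(t)^2$ is upper right dominating.

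First I would record the explicit trace formula already computed in the proof of Lemma~\ref{mainl}:
\[
{\rm tr}\,\widetilde A(t)=[{\rm tr}\,A(t)]\cdot[{\rm tr}\,A(t)+c\,a_{12}(t)]-2.
\]
The idea is that far out on the real axis the behaviour of $\widetilde A(t)$ is governed by $A(t)^2$, and since $A(t)^2$ is upper right dominating, its $(1,2)$-entry $[A(t)^2]_{12}=a_{12}(t)[a_{11}(t)+a_{22}(t)]$ has strictly larger degree than every other entry; in particular ${\rm tr}\,A(t)^2=a_{11}^2+2a_{12}a_{21}+a_{22}^2$ has degree strictly less than $\deg a_{12}(t)+\deg({\rm tr}\,A(t))$, which forces $\deg a_{12}<\deg({\rm tr}\,A(t))$ and hence that ${\rm tr}\,A(t)$, a non-constant polynomial, dominates $a_{12}(t)$ at infinity. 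Consequently $|{\rm tr}\,A(t)|\to\infty$ and $c\,a_{12}(t)/{\rm tr}\,A(t)\to0$, so ${\rm tr}\,\widetilde A(t)\to+\infty$ as $|t|\to\infty$. Thus for $t$ large, ${\rm tr}\,\widetilde A(t)>2$, which is the \emph{wrong} side; so instead I would look near a zero of ${\rm tr}\,A(t)$ lying beyond $N$ — or, if ${\rm tr}\,A(t)$ has no such large zero, near a point where ${\rm tr}\,A(t)$ is large and of the appropriate sign is impossible, so the correct move is: pick a large $t^\ast>N$ with ${\rm tr}\,A(t^\ast)=0$ if one exists. At such a point ${\rm tr}\,\widetilde A(t^\ast)=-2$, an endpoint value, not an interior one. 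The cleanest fix is to choose $t^\ast>N$ with $0<|{\rm tr}\,A(t^\ast)|$ small, say $|{\rm tr}\,A(t^\ast)|<\eta$ for a suitable small $\eta$; then ${\rm tr}\,\widetilde A(t^\ast)$ is within $O(\eta)+O(|c|\,|a_{12}(t^\ast)|\eta)$ of $-2$, hence in $(-2,2)$ for $\eta$ small enough, and by continuity the same holds on a whole compact interval $I\ni t^\ast$, $I\subset(N,\infty)$. Since ${\rm tr}\,A(t)$ is a non-constant polynomial, it either has arbitrarily large zeros or, failing that, $|{\rm tr}\,A(t)|\ge\eta$ eventually — in the latter case I instead use that ${\rm tr}\,A(t)$ is monotone for large $t$ and crosses $2$ or $-2$ only finitely often, so on a suitable large interval ${\rm tr}\,A(t)$ itself lies in $(-2,2)$; but wait, that would contradict $|{\rm tr}\,A(t)|\to\infty$. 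So in fact ${\rm tr}\,A(t)$ must leave $(-2,2)$ for large $t$, and the only way ${\rm tr}\,\widetilde A(t)\in(-2,2)$ for large $t$ is via the near-$(-2)$ regime, which requires ${\rm tr}\,A(t)$ close to $0$, hence a large zero of ${\rm tr}\,A(t)$. Therefore I must first argue that ${\rm tr}\,A(t)$, being non-constant, has a zero; and by replacing $t\mapsto t$ with a shift or by noting the good set $E$ already lies where $|{\rm tr}\,A|<2$, the polynomial ${\rm tr}\,A(t)-2$ changes sign between $E$ and $+\infty$, giving a zero of ${\rm tr}\,A(t)-2$; but we want a zero of ${\rm tr}\,A(t)$ itself. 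The resolution: ${\rm tr}\,A(t)^2={\rm tr}\,\widetilde A(t)|_{c=0}=[{\rm tr}\,A(t)]^2-2$ takes the value $-2$ exactly where ${\rm tr}\,A(t)=0$, and since ${\rm tr}\,A(t)^2$ is non-constant (as $[A(t)^2]_{12}$ dominates, $A(t)^2$ is not $\pm I$, but we need its trace non-constant — this follows because if ${\rm tr}\,A(t)^2$ were constant then $[{\rm tr}\,A(t)]^2$ is constant, so ${\rm tr}\,A(t)$ is constant, contradicting (ii)).

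So the argument structure is: (1) from upper-right-dominance deduce $\deg a_{12}<\deg({\rm tr}\,A)$; (2) deduce ${\rm tr}\,A(t)\to\pm\infty$ and that it has a real zero $t^\ast$, which can be taken $>N$ only if ${\rm tr}\,A$ has arbitrarily large zeros — and since ${\rm tr}\,A$ is a non-constant polynomial tending to $\pm\infty$, it has a \emph{largest} zero $t_0$; beyond $t_0$, ${\rm tr}\,A$ has constant sign and $|{\rm tr}\,A(t)|$ increases past $2$. Hmm — this means I cannot always place $I$ beyond $N$ if $N$ exceeds $t_0$. The genuine fix, and I believe the intended one: use that $A(t)^2$ upper-right-dominating lets us instead track the \emph{off-diagonal} entry, which grows; one shows directly that for $t$ large and $c$ of appropriate sign and size $\sim 1/a_{12}(t)$... but $c$ must be a fixed real with $|c|\le\delta$. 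The cleanest correct statement: choose $\delta$ small; by continuity of all entries, on the interval where ${\rm tr}\,A(t)$ is between, say, $2$ and $2+\kappa$ for small $\kappa$ — there is such an interval arbitrarily far out since ${\rm tr}\,A(t)$ passes monotonically through every large value — we get ${\rm tr}\,\widetilde A(t)=({\rm tr}\,A)({\rm tr}\,A+c\,a_{12})-2$, and we can choose the sign of $c$ so that $c\,a_{12}(t)\approx -{\rm tr}\,A(t)$ is impossible for fixed small $c$ since $a_{12}$ also $\to\infty$. I will therefore present the proof via: pick $t^\ast>N$ large with $|{\rm tr}\,A(t^\ast)|<\eta$; such $t^\ast$ exists because I may, if necessary, first enlarge the ambient polynomial/interval setup, or more honestly because the construction in Section~5 only ever \emph{applies} this lemma to matrices $A_n(t)^2$ built as $A_{n-1}M(c)A_{n-1}$ whose traces do vanish at large $t$. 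I expect the main obstacle to be exactly this point — guaranteeing a usable large zero of ${\rm tr}\,A(t)$ (equivalently a point far out where ${\rm tr}\,A(t)^2$ is near $-2$), which is where the upper-right-dominating hypothesis is essential and which must be threaded carefully; once such $t^\ast$ is in hand, (3) continuity gives a compact interval $I\ni t^\ast$, $I\subset(N,\infty)$, on which ${\rm tr}\,\widetilde A(t)\in(-2,2)$ for all $|c|\le\delta$; (4) shrink $\delta$ further if needed so that Lemma~\ref{mainl} applies on $E$ simultaneously, producing $\widetilde E$, and observe $(\widetilde A(t),\widetilde E\cup I)$ is good since $\widetilde E$ and $I$ are disjoint compacts (choosing $N>\sup E$ or noting $I\subset(N,\infty)$ with $N\ge\sup E$ arranged by the caller), condition (i) is automatic, (ii) was verified in Lemma~\ref{mainl}, and (iii) holds on $\widetilde E$ by that lemma and on $I$ by construction; the estimate $\|\widetilde S(t)-S(t)\|<\varepsilon$ on $\widetilde E$ is inherited verbatim from Lemma~\ref{mainl}.
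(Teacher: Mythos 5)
There is a genuine error near the start that derails the whole argument, and the proposal never recovers: you invert the key degree inequality. You correctly note that upper‑right dominance forces $\deg\bigl({\rm tr}\,A(t)^2\bigr)<\deg a_{12}(t)+\deg\bigl({\rm tr}\,A(t)\bigr)$, and since ${\rm tr}\,A(t)^2=\bigl({\rm tr}\,A(t)\bigr)^2-2$ has degree exactly $2\deg\bigl({\rm tr}\,A(t)\bigr)$, this gives $2\deg({\rm tr}\,A)<\deg a_{12}+\deg({\rm tr}\,A)$, i.e.\ $\deg a_{12}>\deg({\rm tr}\,A)$ — the \emph{opposite} of the inequality you write. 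Consequently your claim that ``${\rm tr}\,A(t)$ dominates $a_{12}(t)$ at infinity'' and that $c\,a_{12}(t)/{\rm tr}\,A(t)\to 0$ is false; in fact $a_{12}$ dominates, which is exactly what makes the hypothesis useful.

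Because of this sign flip you end up hunting for a large zero of ${\rm tr}\,A(t)$ itself, correctly observe that a polynomial tending to infinity has a largest zero and hence no zero beyond $N$ in general, and then wave your hands (``such $t^\ast$ exists because \dots the construction in Section~5 only ever applies this lemma to matrices \dots''). That is not a proof of the lemma as stated. The paper's argument is simpler and works: since $M(c)$ is lower triangular, ${\rm tr}\,\widetilde A(t)={\rm tr}\,[M(c)A(t)^2]=c\,b_{12}(t)+q(t)$ where $b_{12}=[A^2]_{12}$ and $q={\rm tr}(A^2)$. Upper‑right dominance says $\deg b_{12}>\deg q$. Thus $c\,b_{12}+q$ is a non‑constant polynomial whose leading coefficient is $c$ times that of $b_{12}$; for $c$ small of the appropriate sign its largest real zero is as large as desired (roughly $t_0\sim|q/(c\,b_{12})|^{1/(\deg b_{12}-\deg q)}\to\infty$ as $c\to 0$), so one can place a zero $t_0>N$. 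At $t_0$, ${\rm tr}\,\widetilde A(t_0)=0\in(-2,2)$, and by continuity the same holds on a compact interval $I\ni t_0$ in $(N,\infty)$. Your remaining steps — invoking Lemma~\ref{mainl} for $\widetilde E$, taking $\delta$ to be the smaller of the two thresholds, and noting goodness of $(\widetilde A,\widetilde E\cup I)$ — are fine and match the paper, but without the corrected degree comparison and the resulting existence of $t_0>N$, the construction of $I$ is missing.
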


\begin{proof}[Proof.]
By the proof of Lemma \ref{mainl}, we get $\delta_1$ such that for
$|c| < \delta_1$ one can find $\widetilde E\subset E$ satisfying
the conditions: $|E\setminus\widetilde E|<\varepsilon$,
$(\widetilde A(t), \widetilde E)$ is a good pair and $\|\widetilde
S(t)-S(t)\|<\varepsilon$ on $\widetilde E$.

Let $A(t)^2=(b_{ij}(t))$. Since $\widetilde A(t)=A(t)M(c)A(t)$,
$$
{\rm tr}\  \widetilde A (t)={\rm tr}\ [M(c)A(t)^2]= cb_{12}(t) +
q(t),
$$
where the degree of $q(t)$ is less than the degree of $b_{12}(t)$.
Hence if $|c|$ is less than some $\delta_2$ and has the
appropriate sign, then there is $t_0
> N$ for which ${\rm tr }\ \widetilde A (t_0) = 0$. So the condition
(ii) holds on some interval $I$ around $t_0$. This shows that
$(\widetilde A(t), \widetilde E\cup I)$ is a good pair.

It remains to set $\delta = \min\{\delta_1,\delta_2\}$.
\end{proof}

The number $\delta$, constructed in Lemma \ref{modif}, will be
denoted by $\delta(A(t),E,\varepsilon,N)$. To underline that in
the construction of the interval $I$ and the set $\Omega =
\widetilde E \cup I$ the number $c$ from $(-\delta,0)$ or
$(0,\delta)$ is used, we will denote them by $I =
I(A(t),E,\varepsilon,N,c)$ and $\Omega(A(t),E,\varepsilon,N,c)$
respectively.

Now we can prove the theorem.

\begin{proof}[{\bf Proof of Theorem \ref{EUS}}]

We shall start with the matrix $A_0(t)=H(t)M_0H(t)$ and note that
if $c_0<0$, then there exists a closed interval $E_0\subset
(0,+\infty)$ of positive length such that $(A_0(t), E_0)$ is a
good pair. Choose $\varepsilon_0 = |E_0|/3$.

We will construct the sequences of numbers $c_j$,
$\varepsilon_j$, matrix functions $A_j(t)$, compact sets $E_j$ and
compact intervals $I_j$ inductively.

Suppose that these sequences are constructed for $j < n$. Then
set
$$
\varepsilon_n = \frac{1}{3^n}\min_{j<n}\{|I_j|\}, \qquad
\delta = \delta(A_{n-1},E_{n-1},\varepsilon_n,n)
$$
and choose $c_n$ with $|c_n| < \delta$ and with appropriate sign.
Let
$$
I =I(A_{n-1},E_{n-1},\varepsilon_n,n,c_n), \qquad
E_n = \Omega(A_{n-1},E_{n-1},\varepsilon_n,n,c_n).
$$

For these definitions be correct, we have to check that the
pairs $(A_n(t),E_n)$ are good and that $A_n(t)^2$ are right upper
dominating.

The first property follows by induction from Lemma \ref{modif}. To
prove the \linebreak second one, note that for each $n$, the
function $A_n(t)^2$ is a product \linebreak $HMHMH\ldots MH$ of
matrices in which each $H$ is either $H(t)$ or $H(2t)$ and each
$M$ is $M(c)$ with some $c\ne 0$ (possibly different for different
$M$'s). Let $p$ be the number of $M$'s in the product. Then
$A_n(t)^2= (b_{ij}(t))$ where $b_{ij}$ are polynomials with
degrees of $b_{11}(t)$ and $b_{22}(t)$ equal $p$, degrees of
$b_{12}$ and $b_{21}$  equal $(p+1)$ and $(p-1)$ respectively. The
correctness is proved.

Let us set
\begin{equation*}
E =\bigcup_{k=0}^{\infty}\bigcap_{n=k}^{\infty}E_n.
\end{equation*}
It
follows from the choice of the numbers $\varepsilon_j$ that
$|E\cap I_n| \neq 0$ for each n. Hence $E$ is essentially
unbounded. We have to prove that for each $t\in E$ the sequence
$\displaystyle \Big \|\prod_{1 \leq k \leq K}^{\curvearrowleft}
\Phi_kH(t)\Big\|$ is bounded.

If $t\in E$ then there is $m$ such that $t\in E_n$ for each $n \ge
m$. Then for any $K$ which is divided by $2^{m+1}$, the partial
product $\displaystyle \prod_{1 \leq k \leq K}^{\curvearrowleft}
\Phi_kH(t)$ can be written (see (\ref{gendec}) as
\begin{equation*}
\prod_{1 \leq k \leq K}^{\curvearrowleft} \Phi_kH(t) = \prod_{1
\leq \ell \leq L}^{\curvearrowleft} M_{j_\ell}A_{j_\ell-1}(t)=
\prod_{1 \leq \ell \leq L}^{\curvearrowleft}
[M_{j_\ell}S_{j_\ell-1}(t) D_{j_\ell-1}(t)S_{j_\ell-1}(t)^{-1}]
\end{equation*}
where $\{j_\ell\}$ is the strictly increasing finite sequence of
integers such that $\displaystyle
K=2^{j_1}+2^{j_2}+\ldots+2^{j_L}$ and all $j_\ell > m$. To
estimate the norm of this product, note that it consists of
several diagonal matrices of norm $1$, the matrix
$M_{j_1}S_{j_1-1}(t)$ in the beginning, the matrix
$S_{j_L-1}(t)^{-1}$ in the end and several matrices of the kind
$S_{j_\ell-1}(t)^{-1}M_{j_{\ell+1}}S_{j_{\ell+1}-1}(t)$ in the
middle. Now, $\|M_j\|$ are bounded. Also
\begin{multline*}
\|S_j(t)\|\le \|S_j(t)-S_{j-1}(t)\|+\ldots+\|S_{m+1}(t)-S_m(t)\|+\\
\|S_m(t)\|\le \|S_m(t)\|+\sum_{j\ge m+1}\varepsilon_j
\end{multline*}
are bounded for each such $t$. Since the matrices $S_j(t)$ are
unimodular, their inverse are also bounded:
\begin{equation*}
\|S_j(t)\| \le C(t), \;\;, \|S_j(t)^{-1}\| < C(t).
\end{equation*}
It remains to estimate the norms of
$\displaystyle S_{j_\ell-1}(t)^{-1}M_{j_{\ell+1}}S_{j_{\ell+1}-1}(t)$.
We have
\begin{multline*}
\|S_{j_\ell-1}(t)^{-1}M_{j_{\ell+1}}S_{j_{\ell+1}-1}(t)-I\|=\\
\|S_{j_\ell-1}(t)^{-1} \Bigl( \left(M_{j_{\ell+1}}-I
\right)S_{j_{\ell+1}-1}(t)\Bigr. + \Bigl.
\left(S_{j_{\ell+1}-1}(t)- S_{j_\ell-1}(t)\right) \Bigr) \|\le
\\
\|S_{j_\ell-1}(t)^{-1}\|\cdot\Bigl(\|M_{j_{\ell+1}}-I\|\cdot\|S_{j_{\ell+1}-1}(t)\|\Bigr.
+
\Bigl. \|S_{j_{\ell+1}-1}(t)-S_{j_\ell-1}(t)\|\Bigr)\\
\le C(t)\left(|c_{j_{\ell+1}}|
C(t)+\sum_{j=j_\ell}^{j_{\ell+1}-1}\varepsilon_j\right)\,.
\end{multline*}

Hence
$$
\|S_{j_\ell-1}(t)^{-1}M_{j_{\ell+1}}S_{j_{\ell+1}-1}(t)\|\le
\exp\Bigl\{
C(t)\Bigl[C(t)|c_{j_{\ell+1}}|+\sum_{j=j_\ell}^{j_{\ell+1}-1}\varepsilon_j\Bigr]
\Bigr\}\,.
$$
Multiplying all the above estimates, we see that, for
$\displaystyle K  \in 2^{m+1}\mathbb{Z}$, one has
$$
\Bigl\|\prod_{1 \leq k \leq K}^{\curvearrowleft}
\Phi_kH(t)\Bigr\|\le C^2(t) \exp\Bigl\{ C(t)\Bigl[C(t)\sum_{j\ge
1}|c_{j}|+\sum_{j\ge 1}\varepsilon_j\Bigr] \Bigr\}\,.
$$
Therefore the partial products corresponding to $\displaystyle
K\in 2^{m+1}\mathbb{Z}$ are bounded for each $\displaystyle
t\in\bigcap_{n\geq m}E_n$. The products
corresponding to other $K$ differ from the products corresponding to
$\displaystyle K \in 2^{m+1}\mathbb{Z}$ by just N couples of
(uniformly) bounded matrices
($\displaystyle N \in \{1,\ 2,\ , ...,\ 2^{m+1}-1\}$). Therefore, all
the sequence
of partial products is  bounded for such $t$.

\end{proof}

\vspace{0.5cm} \noindent {\large Department of Mathematics,
Michigan State University, East Lansing, MI 48824, USA}

\noindent {\large \textit {E-mail}}: \hspace{0.5cm}
fedja@math.msu.edu

\vspace{0.5cm} \noindent {\large School of Mathematics, Tel Aviv
University, Tel Aviv 69778, Israel}

\noindent {\large \it {E-mail}}:\hspace{0.5cm} shulmank@yahoo.com

\end{document}